\newtheorem{theorem}{Theorem}[section]
\newtheorem{lemma}[theorem]{Lemma}
\theoremstyle{definition}
\newtheorem{definition}[theorem]{Definition}
\newtheorem{remark}[theorem]{Remark}
\newtheorem{prop}[theorem]{Proposition}
\newtheorem*{ack}{Acknowledgement}
\newcommand{\CommaPunct}{\mathpunct{\raisebox{0.0ex}{,}}}
\title{Non-normal edge rings satisfying $(S_{2})$-condition}
\author{Nayana Shibu Deepthi}
\address{Department of Pure and Applied Mathematics, Graduate School of Information Science and Technology, Osaka University, Suita, Osaka 565-0871, Japan}
\email{nayanasd@ist.osaka-u.ac.jp}
\subjclass[2020]{13H10}
\keywords{Edge rings - Normality - Odd cycle condition - $(S_{2})$-condition.}
\begin{document}

\begin{abstract}
Let $G$ be a finite simple connected graph on the vertex set $V(G)=[d]=\{1,\dots ,d\}$, with edge set $E(G)=\{e_{1},\dots , e_{n}\}$. Let $K[\mathbf{t}]=K[t_{1},\dots , t_{d}]$ be the polynomial ring in $d$ variables over a field $K$. The edge ring of $G$ is the semigroup ring $K[G]$ generated by monomials $\mathbf{t}^{e}:=t_{i}t_{j}$, for $e=\{i,j\} \in E(G)$. In this paper, we will prove that, given integers $d$ and $n$, where $d\geq 7$ and $d+1\leq n\leq \frac{d^{2}-7d+24}{2}$, there exists a finite simple connected graph $G$ with $|V(G)|=d$ and $|E(G)|=n$, such that $K[G]$ is non-normal and satisfies $(S_{2})$-condition.
\end{abstract}

\maketitle

\section{Introduction}\label{sec:1}

Let $G$ be a finite simple connected graph on the vertex set $V(G)=[d]$ and let $E(G)=\{e_{1},\dots ,e_{n}\}$ be the edge set of $G$. Let us consider, $K[\mathbf{t}]=K[t_{1},\dots , t_{d}]$ to be the polynomial ring in $d$ variables over a field $K$. For an edge $e=\{i,j\}$ in $E(G)$, we define $\mathbf{t}^{e}:=t_{i}t_{j}$. The subring of $K[\mathbf{t}]$ generated by $\mathbf{t}^{e_{1}},\dots , \mathbf{t}^{e_{n}}$ is called the edge ring of $G$, denoted by $K[G]$. Let $\mathbf{e}_{1},\dots ,\mathbf{e}_{d}$ be the canonical unit coordinate vectors of $\mathbb{R}^{d}$ and for each $e=\{i,j\}\in E(G)$, we define $\rho(e):= \mathbf{e}_{i}+\mathbf{e}_{j}$. Let $S_{G}$ be the affine semigroup generated by $\rho(e_{1}),\dots , \rho(e_{n})$. Then, the edge ring $K[G]$ is the affine semigroup ring of $S_{G}$. 

For an affine semigroup $S\subset \mathbb{N}^{d}$, let $K[S]$ be the affine semigroup ring of $S$. While studying for a characterization of the Cohen-Macaulay affine semigroup ring, Goto and Watanabe \cite{GW} have defined an extension $S^{\prime}$ of $S$ and claimed that the condition, $S^{\prime}=S$ is the necessary and sufficient condition for $K[S]$ to be Cohen-Macaulay. Trung and Hoa \cite{TH} presented a counterexample and also demonstrated that $S^{\prime}=S$ is insufficient to establish the Cohen-Macaulayness of $K[S]$. They have also provided an additional topological condition on $C_{S}$, the convex rational polyhedral cone spanned by $S$ in $\mathbb{Q}^{d}$ and characterized the Cohen-Macaulayness of $K[S]$. Sch\"{a}fer and Schenzel \cite[ Theorem 6.3]{SS} claimed that the condition $S^{\prime}=S$ corresponds to the Serre's condition ($S_{2}$). For the introduction of Serre's condition ($S_{2}$), readers may refer to \cite[Section 2]{BH}.

The Cohen-Macaulayness of the edge ring $K[G]$ in terms of the corresponding graph $G$ has been a subject of extensive research. Given that the edge ring $K[G]$ is an affine semigroup ring, it is known from \cite[Theorem 1]{H} that, if $K[G]$ is normal then $K[G]$ is Cohen-Macaulay.
Ohsugi and Hibi \cite{OH} have characterized the normality of an edge ring in terms of its graph. At about the same time, Simis-Vasconcelos-Villarreal independently came to the same conclusion and reported it in \cite{SVV}. Recall from \cite[Theorem 2.2.22]{BH} that, the edge ring $K[G]$ is normal if and only if $K[G]$ satisfies Serre's conditions $(R_{1})$ and $(S_{2})$. In \cite[Theorem 2.1]{HKa}, Hibi and Katth\"{a}n have characterized the edge rings satisfying $(R_{1})$-condition. Note that Serre's condition $(S_{2})$ is a necessary condition for $K[G]$ to be Cohen-Macaulay. Based on these insights, Higashitani and Kimura \cite{HK} have provided the necessary condition for an edge ring to satisfy the $(S_{2})$-condition.

The main theorem that we will prove in this paper is as follows:

\begin{theorem}\label{thm:main}
Given integers $d$ and $n$ such that, $d\geq 7$ and $d+1\leq n\leq \frac{d^{2}-7d+24}{2}$, there exists a finite simple connected graph $G$ with $|V(G)|=d$ and $|E(G)|=n$ such that, the edge ring $K[G]$ is non-normal and satisfies $(S_{2})$-condition.
\end{theorem}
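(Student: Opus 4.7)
The plan is constructive: for each admissible pair $(d,n)$ I would exhibit an explicit graph $G=G_{d,n}$ on $[d]$ with $n$ edges, and verify both properties. Non-normality will be engineered via the Ohsugi--Hibi odd cycle condition: I reserve two vertex-disjoint triangles $T_{1}=\{1,2,3\}$ and $T_{2}=\{d-2,d-1,d\}$ and never place an edge of $G_{d,n}$ between $V(T_{1})$ and $V(T_{2})$. The odd cycle condition is then automatically violated by the pair $(T_{1},T_{2})$, so $K[G_{d,n}]$ is non-normal. This single structural constraint also explains the numerical range in the theorem: the lower bound $n\geq d+1$ is realised by joining $T_{1}$ and $T_{2}$ through a simple path on the vertices $3,4,\dots,d-2$ (giving $6+(d-5)=d+1$ edges), and the upper bound is realised by $G^{\max}:=K_{d}$ with all $3(d-4)$ edges between $\{1,2,3\}$ and $\{5,\dots,d\}$ deleted, vertex~$4$ playing the role of a gatekeeper that keeps $G^{\max}$ connected. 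A direct count gives $\binom{d}{2}-3(d-4)=\tfrac{d^{2}-7d+24}{2}$ edges, matching the stated maximum.

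To cover every intermediate $n$, I would remove edges from $G^{\max}$ one at a time, always preserving connectivity and the two reserved triangles, producing a descending chain of graphs with the prescribed edge counts. Since no edge between $V(T_{1})$ and $V(T_{2})$ is ever present, non-normality persists throughout the chain. Deciding exactly which edges to delete at each step is essentially bookkeeping, but must be coordinated with the harder step below.

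The real work is the verification of $(S_{2})$. I would apply the criterion $S^{\prime}_{G}=S_{G}$ of Sch\"afer--Schenzel (Theorem~6.3 of \cite{SS}) in combination with the Higashitani--Kimura necessary condition recalled in the introduction. Concretely, given $\alpha\in S^{\prime}_{G}$ I must realise $\alpha$ as an $\mathbb{N}$-linear combination of the vectors $\rho(e)$ for $e\in E(G)$. The structural leverage is that each $G_{d,n}$ has essentially one offending cut, namely the separation between $V(T_{1})$ and $V(T_{2})$, so candidate elements of $S^{\prime}_{G}\setminus S_{G}$ are tightly constrained and can be resolved by routing even walks through the connecting spine of the graph. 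The cleanest route appears to be to treat $G^{\max}$ first, where the abundant connectivity via vertex~$4$ makes the walk-lifting routine, and then induct on single edge deletions down to the path-joined pair of triangles.

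This inductive preservation of $(S_{2})$ under edge deletion is the principal obstacle: in general, removing an edge from an $(S_{2})$ edge ring can destroy $(S_{2})$, so the argument cannot appeal to any abstract stability principle and must exploit the very specific combinatorics of the family $G_{d,n}$ — in particular the presence of vertex~$4$ adjacent to all of $V(T_{1})$, and the near-completeness of the induced subgraph on $\{4,\dots,d\}$ — to ensure that enough alternative walks survive each deletion to express every element of $S^{\prime}_{G}$ as an honest semigroup combination.
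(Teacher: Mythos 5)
Your construction is in fact the same as the paper's: your $G^{\max}$ (that is, $K_{d}$ with all edges between $\{1,2,3\}$ and $\{5,\dots,d\}$ deleted) is precisely the graph $G_{3,d-4}$ of Section~\ref{sec:3}, namely $K_{4}$ and $K_{d-3}$ glued at the single vertex $4=w$, and your descending chain of edge deletions down to two triangles joined by a path is the strategy of Section~\ref{sec:4}. The non-normality half is fine. The genuine gap is the $(S_{2})$ verification, which you correctly flag as the principal obstacle but for which you give no workable mechanism. Attacking $S'_{G}\subseteq S_{G}$ head-on, by trying to write an arbitrary $\alpha\in S'_{G}$ as an $\mathbb{N}$-combination of the $\rho(e)$ via ``routing even walks,'' requires a description of $S'_{G}$ that you do not have. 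The argument that actually works uses the sandwich $S_{G}\subseteq S'_{G}\subseteq\overline{S}_{G}$ together with Ohsugi--Hibi's explicit description of the normalization, which shows every $\alpha\in\overline{S}_{G}\setminus S_{G}$ has $\alpha_{w}=0$ and \emph{odd} coordinate sums over each of the two sides of the cut vertex $w$; on the other hand the facet $F_{w}$ (coming from the regular vertex $w$) satisfies $S_{G}\cap F_{w}=\mathbb{Z}_{\geq 0}\mathcal{A}_{G\setminus w}$, whose elements have \emph{even} coordinate sums on both sides. This parity obstruction shows $\alpha\notin S_{G}-(S_{G}\cap F_{w})$, hence $\alpha\notin S'_{G}$, and that is the whole proof of $(S_{2})$ for $G^{\max}$. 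Nothing in your proposal supplies this (or any substitute) invariant.

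The same gap infects the edge-deletion chain, where your claim that choosing the deletions is ``essentially bookkeeping'' understates the difficulty. Preserving connectivity and the two reserved triangles is not enough: one must ensure at every stage that (i) $w$ remains a regular vertex so that $F_{w}$ is still a facet, (ii) every remaining odd cycle has a vertex adjacent to $w$ and the inclusion $\overline{S}\subseteq S\cup A$ persists (this is the content of Lemmas~\ref{lemma:1.1}--\ref{lemma:2.2}, which depend on the specific removal order through Remarks~\ref{rem:rmv2} and~\ref{rem:rmv}), and (iii) the parity argument at $F_{w}$ still closes. An arbitrary connectivity-preserving deletion order need not satisfy these, so the inductive step is not automatic. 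Finally, a small but real misstep: the Higashitani--Kimura criterion (Theorem~\ref{thm:S2}) is a sufficient condition for an edge ring to \emph{fail} $(S_{2})$; it cannot be used ``in combination with'' the Sch\"afer--Schenzel criterion to \emph{establish} $(S_{2})$, and the paper invokes it only in Section~\ref{sec:5} for the converse direction (showing that adding edges to $G_{a,b}$ destroys $(S_{2})$).
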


A detailed explanation of why the quadratic expression $\frac{d^2-7d+24}{2}$ appears in the main theorem is provided in Section~\ref{sec:6}.

This paper is organized as follows. In Section~\ref{sec:2}, we revisit some basic prerequisite definitions and results that will be encountered throughout this article. Section~\ref{sec:3} deals with introduction of the graph $G_{a,b}$, whose edge ring $K[G_{a,b}]$ is non-normal. In this section, we further prove that the edge ring $K[G_{a,b}]$ satisfies $(S_{2})$-condition. Section~\ref{sec:4} focuses on the step-wise removal of edges from $G_{a,b}$, such that each new graph obtained per step also satisfies both non-normality and $(S_{2})$-condition. In Section~\ref{sec:5}, we prove that any addition of new edges to the graph $G_{a,b}$, either affects the non-normality of the edge ring or leads to the violation of $(S_{2})$-condition. We conclude the article with Section~\ref{sec:6}, which deals with supporting evidence for Theorem~\ref{thm:main} and the conclusions.

\begin{ack}
I would like to thank Professor Akihiro Higashitani, my research supervisor, for introducing the problem and for his valuable comments that greatly improved the manuscript. I would also like to extend my gratitude to the anonymous reviewer for the attentive reading of the manuscript and for the insightful comments and suggestions. 
\end{ack}


\section{Preliminaries}\label{sec:2}

Let $G$ be a finite simple connected graph on the vertex set $V(G)=[d]$, with edge set $E(G)=\{e_{1},\dots ,e_{n}\}$. Let us consider $K[\mathbf{t}]=K[t_{1},\dots ,t_{d}]$ to be the polynomial ring in $d$ variables over a field $K$. For an edge $e=\{i,j\}\in E(G)$, we define $\mathbf{t}^{e}:=t_{i}t_{j}.$ The subring of $K[\mathbf{t}]$ generated by $\mathbf{t}^{e_{1}},\dots , \mathbf{t}^{e_{n}}$ is called the {\em edge ring} of $G$ and let it be denoted as $K[G]$. 

We consider $\mathbf{e}_{1},\dots ,\mathbf{e}_{d}$ to be the canonical unit coordinate vectors of $\mathbb{R}^{d}$. For some edge $e=\{i,j\}\in E(G)$,  define $\rho(e):= \mathbf{e}_{i}+\mathbf{e}_{j}$. Let $\mathcal{A}_{G}:=\{\rho(e)\colon e\in E(G)\}$ and let $S_{G}$ be the affine semigroup generated by $\rho(e_{1}),\dots , \rho(e_{n})$. We can express, $S_{G}:=\mathbb{Z}_{\geq 0}\mathcal{A}_{G}.$
Thus, the edge ring $K[G]$ is the affine semigroup ring of $S_{G}$. 

Consider $C_{G}$ to be the convex rational polyhedral cone spanned by $S_{G}$ in $\mathbb{Q}^{d}$. We may assume that $C_{G}$ is of dimension $d$ and let $\mathcal{F}(G)$ be the set of all facets of $C_{G}$. We define, $\overline{S}_{G}:=\mathbb{Q}_{\geq 0}\mathcal{A}_{G}\cap \mathbb{Z}\mathcal{A}_{G}.$ For any facet $F\in\mathcal{F}(G)$, we define
$$S_{F}:= S_{G}-S_{G}\cap F = \{\mathbf{x}\in \mathbb{Z}\mathcal{A}_{G}\colon \exists \ \mathbf{y}\in S_{G} \cap F \textrm{ such that } \mathbf{x}+\mathbf{y}\in S_{G}\},$$
and $S^{\prime}_{G}:=\bigcap\limits_{F\in \mathcal{F}(G)}S_{F}.$ By definition, $\mathcal{A}_{G}$ is said to be normal when we have $$\mathbb{Z}_{\geq 0}\mathcal{A}_{G}=\mathbb{Z}\mathcal{A}_{G}\cap\mathbb{Q}_{\geq 0}\mathcal{A}_{G},$$ that is, when $\overline{S}_{G}=S_{G}.$

A cycle is said to be {\em minimal} in $G$ if there exists no chord in it. Let $C$ and $C^{\prime}$ be two minimal cycles of $G$ with $V(C)\cap V(C^{\prime})=\emptyset$, if we have $i\in V(C)$ and $j\in V(C^{\prime})$, then $e=\{i,j\}\in E(G)$ is called a {\em bridge} between $C$ and $C^{\prime}$.

A pair of odd cycles $(C, C^{\prime})$ is called {\em exceptional} if $C$ and $C^{\prime}$ are minimal odd cycles in $G$ such that $V(C)\cap V(C^{\prime})=\emptyset $ and there exists no bridge connecting them. We say that a graph $G$ satisfies {\em odd cycle condition}, if for any two odd cycles $C$ and $C^{\prime}$ of $G$, either $V(C)\cap V(C^{\prime})\neq \emptyset $ or there exists a bridge between $C$ and $C^{\prime}$. In other words, the graph $G$ has no exceptional pairs.

\begin{theorem}[{From \cite[Theorem 2.2]{OH} and \cite[Theorem 1.1]{SVV}}]\label{thm:normal}
Let G be a finite simple graph. Then, the edge ring $K[G]$ is normal if and only if $G$ satisfies the odd cycle condition.
\end{theorem}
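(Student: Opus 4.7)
The plan is to assemble Theorem~\ref{thm:main} from the technical ingredients foreshadowed in the introduction rather than to prove those ingredients here. The overall structure is to anchor the proof at the top of the admissible range with one explicit graph, then walk down in single-edge steps to cover every value of $n$ between $d+1$ and $\frac{d^{2}-7d+24}{2}$.

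For the anchor, I would fix parameters $a,b$ depending only on $d$ so that the graph $G_{a,b}$ introduced in Section~\ref{sec:3} satisfies $|V(G_{a,b})|=d$ and $|E(G_{a,b})|=\frac{d^{2}-7d+24}{2}$. The promise in the introduction that Section~\ref{sec:6} explains how this quadratic expression arises strongly suggests that it is precisely $|E(G_{a,b})|$ for the extremal choice of $(a,b)$. By the main result of Section~\ref{sec:3}, the edge ring $K[G_{a,b}]$ is non-normal and satisfies $(S_{2})$, which settles the case $n=\frac{d^{2}-7d+24}{2}$.

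For the remaining values $d+1\leq n<\frac{d^{2}-7d+24}{2}$, I would apply the edge-deletion construction of Section~\ref{sec:4} exactly $k:=\frac{d^{2}-7d+24}{2}-n$ times, producing a descending chain
$$
G_{a,b}=G^{(0)}\supset G^{(1)}\supset\cdots\supset G^{(k)}
$$
on the vertex set $[d]$, in which each $G^{(i+1)}$ is obtained from $G^{(i)}$ by deleting a single carefully chosen edge, and each intermediate $G^{(i)}$ is connected, carries an exceptional pair of minimal odd cycles, and has edge ring satisfying $(S_{2})$. The persistence of the exceptional pair certifies non-normality of $K[G^{(i)}]$ by Theorem~\ref{thm:normal}. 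Taking $G:=G^{(k)}$ then furnishes a graph with $|V(G)|=d$ and $|E(G)|=n$ of the required type.

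The principal obstacles are not in this bookkeeping step but in the two supporting technical statements. The first, lodged in Section~\ref{sec:3}, is to verify $(S_{2})$ for $K[G_{a,b}]$: this requires describing the facets of the cone $C_{G_{a,b}}$ and checking $S^{\prime}_{G_{a,b}}=S_{G_{a,b}}$ in the sense laid out in Section~\ref{sec:2}. The second, in Section~\ref{sec:4}, is to choose at each step an edge whose deletion (i) preserves connectivity, (ii) leaves at least one exceptional pair of odd cycles intact so that non-normality persists via Theorem~\ref{thm:normal}, and (iii) does not introduce any new failure of $(S_{2})$, which again forces a careful analysis of how the facets of the cone transform under edge removal. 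Once these inputs are in hand, Theorem~\ref{thm:main} follows by the assembly above: invoke Section~\ref{sec:3} at the top of the range and iterate the deletion lemma of Section~\ref{sec:4} down to the prescribed $n$. The lower endpoint $n=d+1$ is sharp because a connected graph on $d$ vertices needs at least $d+1$ edges to contain two independent cycles and hence to support any exceptional pair at all.
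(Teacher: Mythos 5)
Your proposal does not address the statement in question. The statement to be proved is Theorem~\ref{thm:normal}, the characterization of normality of $K[G]$ by the odd cycle condition; this is an external result quoted from \cite[Theorem 2.2]{OH} and \cite[Theorem 1.1]{SVV}, and the paper itself offers no proof of it. What you have written is instead an assembly plan for Theorem~\ref{thm:main}, the paper's main theorem. Worse, your argument explicitly \emph{invokes} Theorem~\ref{thm:normal} as an ingredient (``the persistence of the exceptional pair certifies non-normality of $K[G^{(i)}]$ by Theorem~\ref{thm:normal}''), so read as a proof of that very theorem it is circular. A genuine proof of Theorem~\ref{thm:normal} would have to engage with the semigroup $S_{G}$ directly: one direction shows that an exceptional pair $(C,C^{\prime})$ produces the element $\mathbb{E}_{C}+\mathbb{E}_{C^{\prime}}\in\overline{S}_{G}\setminus S_{G}$ (it lies in $\mathbb{Q}_{\geq 0}\mathcal{A}_{G}\cap\mathbb{Z}\mathcal{A}_{G}$ since its double is a sum of the $\rho(e)$ over the two cycles, but it cannot be written in $S_{G}$ because the cycles are odd, chordless, disjoint, and unbridged); the converse direction shows that under the odd cycle condition every element of $\overline{S}_{G}$ already lies in $S_{G}$, which is the content of \cite[Theorem 2.2]{OH}. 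None of this appears in your proposal.

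As a secondary comment: even judged as a sketch of Theorem~\ref{thm:main}, your outline is broadly consistent with the paper's strategy (anchor at $G_{3,d-4}$ with $\binom{4}{2}+\binom{d-3}{2}=\frac{d^{2}-7d+24}{2}$ edges, then delete one edge at a time down to $d+1$ edges via Propositions~\ref{prop:2} and~\ref{prop:3}), but it defers exactly the two points that constitute the paper's actual mathematical content, namely the verification of $(S_{2})$ for $K[G_{a,b}]$ and its stability under the specific edge-removal order. Deferring those is acceptable in an outline of Theorem~\ref{thm:main}, but it does not salvage the mismatch with the statement you were asked to prove.
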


From all of the above observations, we have:
$$S_{G}=\overline{S}_{G}\iff K[G] \textrm{ is normal }\iff G \textrm{ satisfies odd cycle condition}.$$

Let us consider $U\subset V(G)$, and we define $G_{U}$ as the induced subgraph of $G$ with $V(G_{U})=U$ and $E(G_{U})=\{e\in E(G)\colon e\subset U\}.$ Let $i\in V(G)$ and we denote $G\backslash i$ as the induced subgraph of $G$ on the vertex set $V(G\backslash i)=V(G)\backslash\{i\}$. Consider a subset $T\subset V(G)$ and we define:
$$N(G;T):=\big\{v\in V(G)\colon \{v,w\}\in E(G)\textrm{ for some } w\in T\big\}.$$

A subset $T\subset V(G)$ is called {\em independent} if $\{t_{i},t_{j}\}\notin E(G)$ for any $t_{i},t_{j}\in T$. For an independent set $T\subset V(G)$, we define a {\em bipartite graph induced by $T$} as the graph on vertex set $T\cup N(G;T)$ with edge set $\big\{\{v,w\}\in E(G)\colon v\in T,\ w \in N(G;T)\big\}$.

Now, we will be looking at the facets of $C_{G}$. For that, let us look at some important definitions and theorems that have been discussed in \cite{OH}.

\begin{definition}
Let $G$ be a finite connected  simple graph with vertex set $V(G)$. A vertex $v\in V(G)$ is said to be {\em regular} in $G$ if every connected component of $G\backslash v$ contains at least one odd cycle. A non-empty set $T\subset V(G)$ is said to be {\em fundamental} in $G$ if all the conditions below are satisfied by $T$:
\begin{enumerate}
    \item $T$ is an independent set;
    \item the bipartite graph induced by $T$ is connected;
    \item either $T\cup N(G;T)=V(G)$ or every connected component of the graph $G_{V(G)\backslash T\cup N(G;T)}$ contains at least one odd cycle.
\end{enumerate}
\end{definition}

Facets of $C_{G}$ are given by the intersection of the half-spaces defined by the supporting hyperplanes of $C_{G}$, and was investigated by Ohsugi and Hibi \cite{OH}.

\begin{theorem}[{From \cite[Theorem 1.7]{OH}}]
Let $G$ be a finite connected simple graph on the vertex set $[d]$, containing at least one odd cycle. Then, all the supporting hyperplanes of $C_{G}$ are as follows:
\begin{enumerate}
    \item  $\mathcal{H}_{v} = \big\{(x_{1},\dots ,x_{d}) \in \mathbb{R}^{d} \colon x_{v} = 0\big\}$, where $v$ is a regular vertex in $G$.
    \item  $\mathcal{H}_{T} = \big\{(x_{1},\dots ,x_{d}) \in \mathbb{R}^{d} \colon \sum\limits_{ i\in T} x_{i} = \sum\limits_{j\in N(G;T)} x_{j}\big\}$, where $T$ is a fundamental set in $G$.
\end{enumerate}
\end{theorem}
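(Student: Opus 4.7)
The plan is to handle the two directions separately. First, I would verify that each $\mathcal{H}_v$ with $v$ regular, and each $\mathcal{H}_T$ with $T$ fundamental, is a supporting hyperplane whose intersection with $C_G$ has dimension $d-1$; second, I would show that every facet of $C_G$ arises in one of these two ways. The key quantitative tool is the classical dimension formula for the edge cone of a connected graph $H$: the cone spanned by $\{\rho(e) : e \in E(H)\}$ has dimension $|V(H)|$ if $H$ contains an odd cycle, and $|V(H)|-1$ if $H$ is bipartite. Applying this component-wise to subgraphs of $G$ is what converts the geometric facet condition into combinatorial conditions on vertex subsets.

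For the forward direction, the supporting property is immediate from the structure of the edge vectors. For $\mathcal{H}_v$, every $\rho(e)$ has non-negative coordinates, so $C_G \subseteq \{x_v \geq 0\}$; the intersection $\mathcal{H}_v \cap C_G$ is the edge cone of $G \setminus v$, which has dimension $d-1$ precisely when each connected component of $G\setminus v$ contains an odd cycle, that is, when $v$ is regular. For $\mathcal{H}_T$, independence of $T$ forces each edge $\{i,j\}$ to have at most one endpoint in $T$, and if $i\in T$ then $j\in N(G;T)$, so $\sum_{i\in T}x_i \le \sum_{j\in N(G;T)}x_j$ on every generator $\rho(e)$. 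The edges lying on $\mathcal{H}_T$ are exactly those of the bipartite graph induced by $T$ together with edges contained in $V(G)\setminus(T\cup N(G;T))$, and conditions (2) and (3) of fundamentality combined with the dimension formula give the required dimension $d-1$.

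For the converse, suppose $H = \{x : \langle a, x\rangle = 0\}$ defines a facet of $C_G$ with $a_i + a_j \ge 0$ on every edge. I would partition $V(G) = T \cup P \cup V_0$ according to the sign of $a_i$; the edge inequality forces $T$ to be independent and every neighbor of a vertex of $T$ to lie in $P$. On edges of the facet one has $a_i + a_j = 0$, so either both endpoints lie in $V_0$, or one endpoint is in $T$ and the other in $P$ with $a_j = -a_i$. Tracing alternating walks in the bipartite subgraph on $T\cup N(G;T)$ shows that $|a_i|$ is constant on each connected component, so after rescaling $a$ may be taken $\{-1,0,1\}$-valued. The facet hypothesis then rules out a disconnected bipartite piece or the coexistence of a non-trivial $T$ with additional zero-coordinate substructure, since any such splitting would yield a face of codimension strictly greater than one. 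This reduces $a$ to one of two shapes: either $T = \emptyset$ with $a$ supported at a single coordinate $v$ (giving $\mathcal{H}_v$, with regularity of $v$ forced by the dimension condition), or $T \neq \emptyset$ with the bipartite subgraph on $T\cup N(G;T)$ connected (giving $\mathcal{H}_T$, with the fundamental conditions forced by the dimension condition).

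The main obstacle I anticipate is this converse direction, in particular the simultaneous extraction of three properties from the single codimension-one assumption: that $a$ is $\{-1,0,1\}$-valued up to positive scaling, that the bipartite graph induced by $T$ is connected, and that every component of the remainder graph $G_{V(G)\setminus(T\cup N(G;T))}$ contains an odd cycle. Each follows from a dimension count, but translating the geometric facet condition into the combinatorial conditions (1)--(3) of the definition of a fundamental set, while simultaneously excluding all other configurations of $a$, demands detailed bookkeeping of how each subgraph of $G$ contributes dimension to $C_G$.
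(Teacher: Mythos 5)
The paper offers no proof of this statement: it is quoted verbatim from Ohsugi--Hibi \cite[Theorem 1.7]{OH} and used as a black box, so there is no in-paper argument to compare against. Your plan is essentially the original Ohsugi--Hibi argument: the dimension formula for edge cones ($|V(H)|$ with an odd cycle, $|V(H)|-1$ if bipartite, summed over components) is exactly the tool that converts the facet condition into regularity of $v$ and conditions (2)--(3) of fundamentality, and your forward direction is essentially complete as written. For the converse, your skeleton is correct, but note one imprecision in the bookkeeping you rightly flag as the hard part: after partitioning $V(G)$ by the sign of $a$, the positive part $P$ need not coincide with $N(G;T)$ a priori. A vertex $p\in P\setminus N(G;T)$ satisfies $a_p+a_q>0$ on every edge at $p$ (since $a_p+a_q=0$ would force $q\in T$), hence is isolated in the subgraph of facet edges and contributes an extra dimension to the orthogonal complement of the facet's span; this configuration must be excluded alongside the ``zero-coordinate substructure'' you mention, and the same dimension count does exclude it when $T\neq\emptyset$. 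It is also this count that, when the entire normal vector is supported on a single isolated coordinate, collapses the first branch to $a=\mathbf{e}_v$ with the components of the facet subgraph equal to those of $G\setminus v$, giving regularity. With that adjustment the plan is sound and matches the standard proof.
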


In this paper, we denote $F_{v}$ and $F_{T}$ as the facets of $C_{G}$ corresponding to the hyperplanes $\mathcal{H}_{v}$ and $\mathcal{H}_{T}$ respectively.

With reference to the work of Ohsugi and Hibi \cite[Theorem 2.2]{OH}, normalization of the edge ring $K[G]$ can be expressed as $$\overline{S}_{G}=S_{G}\ + \ \mathbb{Z}_{\geq 0}\big\{\mathbb{E}_{C}+\mathbb{E}_{C^{\prime}}\colon (C,C^{\prime}) \textrm{ is exceptional in } G\big\},$$ where for any odd cycle $C$, we define $\mathbb{E}_{C}:= \sum\limits_{i\in V(C)}\mathbf{e}_{i} .$ We observe that, $$2\big(\mathbb{E}_{C}+\mathbb{E}_{C^{\prime}}\big)=\Bigg(\displaystyle{\sum\limits_{e\in E(C)}}\rho(e)+\displaystyle{\sum\limits_{e^{\prime}\in E(C^{\prime})}}\rho(e^{\prime})\Bigg)\in S_{G}.$$

Through the work of Goto and Watanabe \cite{GW} and the investigation by Trung and Hoa \cite{TH}, the necessary and sufficient conditions for $K[G]$ to be Cohen-Macaulay were established. Followed by this, Sch\"{a}fer and Schenzel \cite[ Theorem 6.3]{SS} stated that, the condition $S^{\prime}_{G}=S_{G}$ corresponds to the Serre's condition $(S_{2})$. 

In general,  $S_{G}\subset S^{\prime}_{G}\subset \overline{S}_{G}.$ Therefore, in order to prove that the edge ring $K[G]$ satisfies $(S_{2})$-condition, it is enough to show that for any $\alpha\in \overline{S}_{G}\backslash S_{G}$, the element $\alpha\notin S^{\prime}_{G}$. This implies that $S^{\prime}_{G}\subset S_{G}$ and therefore $S^{\prime}_{G}=S_{G}$.

 In \cite{HK}, Higashitani and Kimura have provided the necessary condition that a graph $G$ has to hold in order to satisfy $(S_{2})$-condition. 

\begin{theorem} [{From \cite[Theorem 4.1]{HK}}]\label{thm:S2}
Let $G$ be a finite simple connected graph. Suppose that, there exists an exceptional pair $(C,C^{\prime})$ satisfying the following two conditions:

\begin{enumerate}
    \item for each regular vertex $v\in V(G)\backslash [V(C)\cup V(C^{\prime})]$ in $G$, both $C$ and $C^{\prime}$ belong to the same connected component of the graph $G\backslash v ;$
    \item for each fundamental set $T \in G$ with $[V(C)\cup V(C^{\prime})]\cap [T\cup N(G;T)]=\emptyset$, both $C$ and $C^{\prime}$ belong to the same connected component of $G_{V(G)\backslash (T\cup N(G;T))}$.
\end{enumerate}

Then, $\mathbb{E}_{C}+\mathbb{E}_{C^{\prime}}\in S_{G}^{\prime}$. In particular, $S_{G}\neq S_{G}^{\prime}$.
\end{theorem}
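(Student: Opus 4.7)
The plan is to show that $\alpha := \mathbb{E}_C + \mathbb{E}_{C'}$ lies in $S_F$ for every facet $F \in \mathcal{F}(G)$, whence $\alpha \in S_G' = \bigcap_{F \in \mathcal{F}(G)} S_F$; together with $\alpha \notin S_G$ (which follows from $(C, C')$ being exceptional) this gives $S_G \neq S_G'$. Observe $\alpha \in \overline{S}_G \subset \mathbb{Z}\mathcal{A}_G$ since $2\alpha = \sum_{e \in E(C)} \rho(e) + \sum_{e \in E(C')} \rho(e) \in S_G$, so $\alpha$ satisfies the lattice-membership requirement of the definition of $S_F$. For each facet $F$, I will construct $\mathbf{y} \in S_G \cap F$ with $\alpha + \mathbf{y} \in S_G$. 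The main combinatorial device is a pair of identities: (a) for any $u \in V(C)$, the even path $C - u$ admits a perfect matching $M_u$, giving $\mathbb{E}_C = \mathbf{e}_u + \sum_{e \in M_u} \rho(e)$, with the analogue for $C'$; (b) for a walk $y_0, y_1, \dots, y_m$ in $G$, the alternating sum $\sum_{i=1}^m (-1)^{i+1} \rho(\{y_{i-1}, y_i\})$ equals $\mathbf{e}_{y_0} + \mathbf{e}_{y_m}$ if $m$ is odd and $\mathbf{e}_{y_0} - \mathbf{e}_{y_m}$ if $m$ is even.

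For $F = F_v$ with $v \notin V(C) \cup V(C')$, condition (1) yields a path $P = (x_0, \dots, x_m)$ in $G \setminus v$ from $x_0 \in V(C)$ to $x_m \in V(C')$. If $m$ is odd, identity (b) writes $\mathbf{e}_{x_0} + \mathbf{e}_{x_m} = \mathbf{y}^+ - \mathbf{y}^-$ where $\mathbf{y}^\pm$ are non-negative $\mathbb{Z}$-combinations of $\rho(e)$ on $E(P)$; combining with (a) at $u = x_0$ and $u = x_m$ gives $\alpha + \mathbf{y}^- = \mathbf{y}^+ + \sum_{e \in M_{x_0}} \rho(e) + \sum_{e \in M_{x_m}} \rho(e) \in S_G$, with $\mathbf{y}^- \in S_G \cap F_v$ because $E(P) \subset E(G \setminus v)$. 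When $m$ is even, prepend an odd-length walk along the cycle $C$ (which lies in $V(C) \subset V(G) \setminus \{v\}$ since $v \notin V(C)$) to obtain an odd-length walk, reducing to the previous case. For $v \in V(C)$ (the case $v \in V(C')$ being symmetric), regularity of $v$ together with connectedness of $G$ force $v$ to have an edge $\{v, w\}$ into the component $H$ of $G \setminus v$ containing $V(C')$; since $H$ contains the odd cycle $C'$, there is an even-length walk in $H$ from some $u' \in V(C')$ to $w$. Using (b) on this walk to rewrite $\mathbf{e}_{u'} - \mathbf{e}_w$, together with $\rho(\{v, w\}) = \mathbf{e}_v + \mathbf{e}_w$ and identity (a) at $u = v$ in $C$ and at $u'$ in $C'$, one assembles the desired $\mathbf{y} \in S_G \cap F_v$.

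For $F = F_T$, note that $\rho(e)$ lies on $\mathcal{H}_T$ precisely when $e$ is either a bipartite edge between $T$ and $N(G; T)$ or lies inside $V(G) \setminus (T \cup N(G; T))$; hence such $\rho(e)$ generate $S_G \cap F_T$. When $[V(C) \cup V(C')] \cap [T \cup N(G; T)] = \emptyset$, condition (2) supplies a connecting path in $G_{V(G) \setminus (T \cup N(G; T))}$, and the matchings $M_u, M_{u'}$ as well as any odd-cycle detour along $C$ or $C'$ all use "outside" edges, so the construction from the $F_v$ case transplants verbatim to $F_T$. In the overlap case, one adapts the routing by threading the connecting walk through bipartite edges between $T$ and $N(G; T)$, whose existence and connectedness are guaranteed by fundamentality of $T$, and by using odd cycles inside $G_{V(G) \setminus (T \cup N(G; T))}$ available from the third clause of the definition of fundamental set.

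The main obstacle is the parity correction: identity (b) produces the useful form $\mathbf{e}_{y_0} + \mathbf{e}_{y_m}$ only when $m$ is odd, so when only an even-length connecting walk is initially available one must detour through an odd cycle without leaving the facet. For $F_v$ the detour is realized inside the cycles $C, C'$ or the component $H$; for $F_T$ it must lie inside $G_{V(G) \setminus (T \cup N(G; T))}$ or within the bipartite subgraph induced by $T$. It is precisely here that the regularity of $v$, the fundamentality of $T$, and hypotheses (1)–(2) are used critically to guarantee the availability of the right odd-cycle detour within each facet.
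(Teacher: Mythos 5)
First, a point of reference: the paper does not prove this statement at all --- it is quoted verbatim from Higashitani--Kimura \cite[Theorem 4.1]{HK} and used as a black box. So there is no in-paper proof to compare against; I can only assess your argument on its own terms. Your overall strategy (show $\alpha=\mathbb{E}_C+\mathbb{E}_{C'}\in S_F$ for every facet $F$, then intersect) is the natural one and surely the one used in \cite{HK}, and several of your cases are genuinely complete: the identities (a) and (b) are correct, the facets $F_v$ with $v\notin V(C)\cup V(C')$ are handled correctly using hypothesis (1) with a legitimate parity fix via a detour around $C$, the facets $F_v$ with $v\in V(C)\cup V(C')$ are handled correctly (and you rightly observe that only connectedness of the relevant component of $G\setminus v$ is needed there), and the facets $F_T$ with $[V(C)\cup V(C')]\cap[T\cup N(G;T)]=\emptyset$ transplant cleanly because every edge involved lies on $\mathcal{H}_T$.

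The genuine gap is the remaining case: fundamental sets $T$ with $[V(C)\cup V(C')]\cap[T\cup N(G;T)]\neq\emptyset$. This is exactly the case the hypotheses (1)--(2) say nothing about, so membership $\alpha\in S_{F_T}$ must be proved unconditionally from the structure of fundamental sets and exceptional pairs --- and that is where the real content of \cite[Theorem 4.1]{HK} lives. Your sentence ``one adapts the routing by threading the connecting walk through bipartite edges between $T$ and $N(G;T)$'' conceals the entire difficulty: the correction term $\mathbf{y}^-$ must lie in $S_G\cap F_T$, and since $\phi_T(\rho(e))\le 0$ for every edge with equality only for edges on $\mathcal{H}_T$, this forces \emph{every} edge appearing in $\mathbf{y}^-$ to be either a $T$--$N(G;T)$ edge or an edge avoiding $T\cup N(G;T)$ entirely. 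An arbitrary connecting walk between $C$ and $C'$ gives no such control over its even-indexed edges, and the naive repairs (e.g.\ absorbing a vertex $s\in V(C)\cap N(G;T)$ by adding $\rho(\{t,s\})$ for $t\in T$) just push the problem into a new term $\mathbf{e}_t+\mathbf{e}_{u'}$ requiring another constrained walk. You correctly identify which structural facts (connectedness of the bipartite graph induced by $T$, odd cycles in the components of $G_{V(G)\setminus(T\cup N(G;T))}$) must enter, but you do not construct the decomposition, and without it the proof of $\alpha\in S_G'$ is incomplete. A smaller cosmetic issue: your justification ``$\alpha\in\overline{S}_G\subset\mathbb{Z}\mathcal{A}_G$ since $2\alpha\in S_G$'' only yields $\alpha\in\mathbb{Q}_{\ge 0}\mathcal{A}_G$; membership in the lattice $\mathbb{Z}\mathcal{A}_G$ needs the separate (standard) observation that $\mathbb{Z}\mathcal{A}_G=\{x\in\mathbb{Z}^d:\sum_i x_i\in 2\mathbb{Z}\}$ for a connected non-bipartite graph, which $\alpha$ satisfies because $|V(C)|+|V(C')|$ is even.
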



\section{The Graph $G_{a,b}$}\label{sec:3}

This section explicitly deals with the construction and study of a special graph $G_{a,b}$, whose edge ring $K[G_{a,b}]$ is non-normal. We conclude the section by stating and proving a proposition that, the edge ring $K[G_{a,b}]$ satisfies $(S_{2})$-condition.

Let $G_{a,b}$ be a simple finite connected graph with $|V(G_{a,b})|=d=a+b+1$, where $3\leq a\leq b$. 
We construct the graph $G_{a,b}$ (Figure~\ref{fig:Gab}) such that, it is formed by the union of two complete graphs $K_{a+1}$ and $K_{b+1}$ with exactly one common vertex.
Let us consider the vertex set $V(G_{a,b})=V(K_{a+1})\cup V(K_{b+1})$, 
such that $V(K_{a+1})=\{u_{1},\dots , u_{a},w\}$ and $V(K_{b+1})=\{v_{1},\dots , v_{b},w\}$.

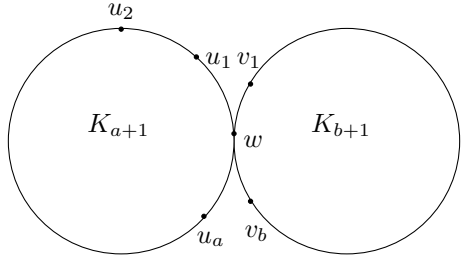
\begin{figure}[ht]
\centering
\begin{tikzpicture}
\filldraw[color=black!100, fill=white!5,  thin](6,3) circle (1.5);
\filldraw[color=black!100, fill=white!5, thin](3,3) circle (1.5);
\filldraw [black] (4,4.12) circle (0.8pt);
\filldraw [black] (3,4.49) circle (0.8pt);
\filldraw [black] (4.1,2) circle (0.8pt);
\filldraw [black] (4.72,2.2) circle (0.8pt);
\filldraw [black] (4.72,3.76) circle (0.8pt);
\filldraw [black] (4.5,3.1) circle (0.8pt);
\filldraw [black] (4.8,2) node[anchor=north] {$v_{b}$};
\filldraw [black] (4.7,4.3) node[anchor=north] {$v_{1}$};
\filldraw [black] (4.3,4.3) node[anchor=north] {$u_{1}$};
\filldraw [black] (4.5,3) node[anchor=west] {$w$};
\filldraw [black] (5.4,3.2) node[anchor=west] {$K_{b+1}$};
\filldraw [black] (3,4.5) node[anchor=south] {$u_{2}$};
\filldraw [black] (3.5,3.2) node[anchor=east] {$K_{a+1}$};
\filldraw [black] (4.5,1.7) node[anchor=east] {$u_{a}$};
\end{tikzpicture}
\caption{The graph $G_{a,b}$}
\label{fig:Gab}
\end{figure}

Any vertex $v\in V(G_{a,b})$ is regular in $G_{a,b}$. We observe that the fundamental sets in $G_{a,b}$ are : $$\{i\},\ \forall\ i\in V(G_{a,b}) \textrm{ and } \{i,j\},\ \forall\ \{i,j\}\notin E(G_{a,b}).$$

For any fundamental set $T\subset V(G_{a,b})$, let $H_{T}$ be the bipartite graph induced by $T$. We consider $K_{a}$ to be the induced subgraph of $G_{a,b}$ on vertex set $V(K_{a+1})\backslash \{w\}$, that is, the complete graph on $V(K_{a})=\{u_{1},\dots , u_{a}\}$. Similarly, $K_{b}$ is the induced subgraph of $G_{a,b}$ on the vertex set $V(K_{b+1})\backslash \{w\}$, which is the complete graph on $V(K_{b})=\{v_{1},\dots , v_{b}\}$. 

Let us denote $\mathcal{A}_{G_{a,b}}:=\{\rho(e)\colon e\in E(G_{a,b})\}$, $S :=S_{G_{a,b}}$ and $\overline{S}:=\overline{S_{G_{a,b}}}.$ We consider $C_{S}$ to be the convex rational polyhedral cone spanned by $\mathcal{A}_{G_{a,b}}$ in $\mathbb{Q}^{d}$, {\em i.e.}, $C_{S}:=C_{G_{a,b}}$. Let $\mathcal{F}(G_{a,b})$ be the set of all facets of $C_{S}$ and for any $F_{i}\in\mathcal{F}(G_{a,b}),$
$$S_{i}:= S-S\cap F_{i} = \{\mathbf{x}\in \mathbb{Z}\mathcal{A}_{G_{a,b}}\colon \exists\ \mathbf{y}\in S\cap F_{i} \textrm{ such that } \mathbf{x}+\mathbf{y}\in S\},$$
$$S^{\prime}:=\bigcap\limits_{F_{i}\in \mathcal{F}(G_{a,b})}S_{i}.$$

Now, let us look at the facets of $C_{S}$ in detail. We observe that, for any regular vertex $v\in V(G_{a,b})$,
$$S\cap F_{v}=\mathbb{Z}_{\geq 0}\mathcal{A}_{G_{a,b}\backslash v}.$$

Corresponding to each fundamental set in $G_{a,b},$ we have
$$S\cap F_{\{w\}}=\mathbb{Z}_{\geq 0}\mathcal{A}_{H_{\{w\}}},$$
$$S\cap F_{\{u_{i}\}}=\mathbb{Z}_{\geq 0}\mathcal{A}_{H_{\{u_{i}\}}\bigsqcup K_{b}},$$
$$S\cap F_{\{v_{j}\}}=\mathbb{Z}_{\geq 0}\mathcal{A}_{K_{a} \bigsqcup H_{\{v_{j}\}} },$$
$$S\cap F_{\{u_{i},v_{j}\}}=\mathbb{Z}_{\geq 0}\mathcal{A}_{H_{\{u_{i},v_{j}\}}},$$ 
where $1\leq i\leq a$ and $1\leq j\leq b.$ Note that, throughout this investigation, we are only concerned about the description of $S\cap F_{w}$, for a regular vertex $w$.

\begin{lemma}\label{lemma:1}
Let the pair of odd cycles $(C, C^{\prime})$ be exceptional in $G_{a,b}$. Consider vertices $v,w\in V(G_{a,b})$, where $w$ is the common vertex of $K_{a+1}$ and $K_{b+1}$. Let $\mathbf{e}_{v}$ and $\mathbf{e}_{w}$ be the canonical unit coordinate vectors of $\mathbb{R}^{d}$ corresponding to vertices $v$ and $w$ respectively. Then, 
$$\mathbb{E}_{C}\ +\ \mathbb{E}_{C^{\prime}}\ +\ \mathbf{e}_{v}\ + \ \mathbf{e}_{w} \in S.$$
\end{lemma}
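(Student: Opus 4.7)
The approach is a direct, constructive proof. First I would determine the structure of exceptional pairs in $G_{a,b}$. Because $G_{a,b}$ is the union of $K_{a+1}$ and $K_{b+1}$ sharing only the vertex $w$, every cycle of $G_{a,b}$ lies entirely in $K_{a+1}$ or entirely in $K_{b+1}$; minimal cycles in a complete graph are triangles, so every minimal odd cycle of $G_{a,b}$ is a triangle. Two triangles in the same $K_{a+1}$ or $K_{b+1}$ must either share a vertex or be joined by an edge, so they cannot form an exceptional pair. Hence, up to relabeling, $V(C)=\{u_{i_1},u_{i_2},u_{i_3}\}\subset V(K_a)$ and $V(C')=\{v_{j_1},v_{j_2},v_{j_3}\}\subset V(K_b)$, and in particular neither triangle contains $w$.

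Next, observe that $\mathbb{E}_C+\mathbb{E}_{C'}+\mathbf{e}_v+\mathbf{e}_w$ has coordinate sum $8$, while each $\rho(e)$ has coordinate sum $2$, so any certificate of membership in $S$ must be a sum of exactly four edge vectors. I would carry out a short case analysis on the position of $v$. If $v=w$, the four edges $\{w,u_{i_1}\},\{u_{i_2},u_{i_3}\},\{w,v_{j_1}\},\{v_{j_2},v_{j_3}\}$ clearly work. If $v=u_i$ with $u_i\notin V(C)$, I replace the first two edges by $\{u_{i_1},u_{i_2}\}$ and $\{u_{i_3},u_i\}$, both available by completeness of $K_{a+1}$. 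If $v=u_{i_1}\in V(C)$, I use $\{u_{i_1},u_{i_2}\}$ and $\{u_{i_1},u_{i_3}\}$ instead. In each case, the remaining two edges $\{w,v_{j_1}\}$ and $\{v_{j_2},v_{j_3}\}$ account precisely for $\mathbb{E}_{C'}+\mathbf{e}_w$. The case $v\in V(K_b)$ is handled by the symmetric construction, swapping the roles of $K_{a+1}$ and $K_{b+1}$.

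The only conceptual input is the classification of exceptional pairs; after that, the completeness of $K_{a+1}$ and $K_{b+1}$ guarantees that every edge used in the decomposition genuinely exists. I do not anticipate a real obstacle, but one must be attentive to whether $v$ coincides with one of the triangle vertices, since this changes which pairings are admissible inside the triangle containing $v$. Once the four edges are exhibited, checking that their $\rho$-values sum to $\mathbb{E}_C+\mathbb{E}_{C'}+\mathbf{e}_v+\mathbf{e}_w$ is a coordinate-by-coordinate verification, and membership in $S$ follows immediately from the definition of $S=\mathbb{Z}_{\geq 0}\mathcal{A}_{G_{a,b}}$.
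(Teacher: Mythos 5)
Your proposal is correct and follows essentially the same route as the paper: both exhibit the target vector explicitly as a sum of four edge vectors $\rho(e)$, using the completeness of $K_{a+1}$ and $K_{b+1}$ to guarantee the needed edges exist (the paper condenses your case analysis on the position of $v$ by simply choosing a cycle vertex $u_{i_1}\neq v$ and pairing $v$ with it). The extra observations you make --- the classification of exceptional pairs and the degree count forcing exactly four edges --- are sound but not needed beyond what the paper's ``without loss of generality'' already covers.
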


\begin{proof}
We consider an exceptional pair $(C, C^{\prime})$ in $G_{a,b}$. Without loss of generality, let $C=\{u_{i_{1}},u_{i_{2}},u_{i_{3}}\}$ be a minimal odd cycle in $K_{a+1}$ and $C^{\prime}=\{v_{j_{1}},v_{j_{2}},v_{j_{3}}\}$ be a minimal odd cycle in $K_{b+1}$. 

Since $K_{a+1}$ and $K_{b+1}$ are complete graphs with common vertex $w$ and $(C, C^{\prime})$ is exceptional in $G_{a,b}$, we have $V(C)\cap V(C^{\prime})= \emptyset $ and $w\notin V(C)\cup V(C^{\prime})$.

Without loss of generality, we may assume $v \in V(K_{a+1})$. Given that $K_{a+1}$ and $K_{b+1}$ are complete graphs, for any $u_{i_{k}}\in V(C)$ that is distinct from $v$,  we have $\{v, u_{i_{k}}\}\in E(G_{a,b})$ and for any $v_{j_{k}}\in V(C^{\prime})$, we have $\{w, v_{j_{k}}\}\in E(G_{a,b})$. Suppose, we choose $u_{i_{1}}\neq v$. Then we can express

$\begin{array} {lcl}  \mathbb{E}_{C} + \mathbb{E}_{C^{\prime}} + \mathbf{e}_{v} +  \mathbf{e}_{w}& = & \sum\limits_{k=1}^{3}\mathbf{e}_{u_{i_{k}}}+\sum\limits_{k=1}^{3}\mathbf{e}_{v_{j_{k}}}+\mathbf{e}_{v} +  \mathbf{e}_{w}\\ \\
& = & (\mathbf{e}_{v}+ \mathbf{e}_{u_{i_{1}}})+ \sum\limits_{k=2}^{3}\mathbf{e}_{u_{i_{k}}}+\sum\limits_{k=1}^{2}\mathbf{e}_{v_{j_{k}}}+ (\mathbf{e}_{v_{j_{3}}}+  \mathbf{e}_{w}).
\end{array}$ 

This can be written as,
$$\mathbb{E}_{C} + \mathbb{E}_{C^{\prime}} + \mathbf{e}_{v} +  \mathbf{e}_{w}=\rho\big(\{v,u_{i_{1}}\}\big)+\rho\big(\{u_{i_{2}},u_{i_{3}}\}\big)+\rho\big(\{v_{j_{1}},v_{j_{2}}\}\big)+\rho\big(\{v_{j_{3}},w\}\big). $$

As we can see, the expression $\mathbb{E}_{C} + \mathbb{E}_{C^{\prime}} + \mathbf{e}_{v} +  \mathbf{e}_{w}$ can be written as a linear combination of some $\rho(e),$ where $e\in E(G_{a,b})$. Therefore, for any $v,w\in V(G_{a,b})$, we have $\big(\mathbb{E}_{C} +\mathbb{E}_{C^{\prime}} + \mathbf{e}_{v} +  \mathbf{e}_{w}\big) \in S.$ 
\end{proof}

Let us consider $\mathbf{x}=(x_{u_{1}},\dots , x_{u_{a}},x_{w},x_{v_{1}},\dots , x_{v_{b}})\in \mathbb{Z}^{d}_{\geq 0}$. We define a set,
$$A :=\bigg\{ \mathbf{x}\ \colon\ x_{w}=0,\ \displaystyle{\sum\limits_{u\in\{u_{1},\dots , u_{a}\}}}x_{u} \textrm{ is odd },\ \displaystyle{\sum\limits_{v\in\{v_{1},\dots , v_{b}\}}}x_{v} \textrm{ is odd }\bigg\}.$$

For the simple finite connected graph $G_{a,b}$ (Figure~\ref{fig:Gab}) and the set $A$ as defined above, we can state the following lemma.

\begin{lemma}\label{lemma:2}
$\overline{S}\subset S\cup A$.
\end{lemma}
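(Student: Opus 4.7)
The plan is to establish the contrapositive: every $\alpha\in\overline{S}$ with $\alpha\notin A$ must already lie in $S$. Write $\alpha=(x_{u_{1}},\dots ,x_{u_{a}},x_{w},x_{v_{1}},\dots ,x_{v_{b}})$ and set $X_{u}=\sum_{i}x_{u_{i}}$, $X_{v}=\sum_{j}x_{v_{j}}$.

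First, I would reformulate membership in $S$ using the gluing structure of $G_{a,b}$. Since $G_{a,b}$ is obtained by identifying $K_{a+1}$ and $K_{b+1}$ at the single vertex $w$, every edge of $G_{a,b}$ lies in exactly one of the two cliques. Therefore $\alpha\in S$ if and only if $x_{w}$ admits a split $x_{w}=x_{w}^{(a)}+x_{w}^{(b)}$ with non-negative integers such that $(x_{u_{1}},\dots ,x_{u_{a}},x_{w}^{(a)})\in S_{K_{a+1}}$ and $(x_{v_{1}},\dots ,x_{v_{b}},x_{w}^{(b)})\in S_{K_{b+1}}$. Both $K_{a+1}$ and $K_{b+1}$ trivially satisfy the odd cycle condition (any two triangles share a vertex), so by Theorem~\ref{thm:normal} they are normal, and $S_{K_{n}}$ consists exactly of the non-negative integer tuples with even coordinate sum each of whose entries is at most half that sum.

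Unpacking these two simultaneous conditions, the task reduces to finding an integer $x_{w}^{(a)}$ with $x_{w}^{(a)}\equiv X_{u}\pmod 2$ in the interval $[L,U]$, where, setting $M_{u}=\max_{i}\max(0,2x_{u_{i}}-X_{u})$ and $M_{v}=\max_{j}\max(0,2x_{v_{j}}-X_{v})$, the endpoints are $L=\max(0,M_{u},x_{w}-X_{v})$ and $U=\min(x_{w},X_{u},x_{w}-M_{v})$. Feeding the hypothesis $\alpha\in\overline{S}$ into the facet inequalities of Theorem~1.7 of \cite{OH} applied to $G_{a,b}$, namely $2x_{u_{i}}\leq X_{u}+x_{w}$, $2x_{v_{j}}\leq X_{v}+x_{w}$, $x_{w}\leq X_{u}+X_{v}$, and crucially $2(x_{u_{i}}+x_{v_{j}})\leq X_{u}+X_{v}+x_{w}$ coming from the fundamental sets $\{u_{i},v_{j}\}$, one obtains $M_{u}\leq X_{u}$, $M_{v}\leq X_{v}$, and $M_{u}+M_{v}\leq x_{w}$; verifying the nine pairings of candidates then gives $L\leq U$.

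The heart of the argument is locating $x_{w}^{(a)}$ of the correct parity. If $L<U$ the interval contains two consecutive integers and either parity is available. When $L=U$ I would case-split according to which expressions attain the extremum in $L$ and $U$; in each sub-case the tight inequality, combined with the parity condition $X_{u}+X_{v}+x_{w}\equiv 0\pmod 2$ (which holds because $\alpha\in\mathbb{Z}\mathcal{A}_{G_{a,b}}$), forces $L\equiv X_{u}\pmod 2$ automatically. The sole exception is when $L=U=0$ is forced by $x_{w}=0$; then parity matches iff $X_{u}$ is even, and if it fails the total parity forces both $X_{u},X_{v}$ to be odd, placing $\alpha$ exactly in $A$. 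The main obstacle is precisely this nine-case parity bookkeeping when $L=U$, tracking how each tight cone inequality propagates through the evenness of $X_{u}+X_{v}+x_{w}$ to yield $L\equiv X_{u}\pmod 2$. The fundamental-set inequality $2(x_{u_{i}}+x_{v_{j}})\leq X_{u}+X_{v}+x_{w}$ plays a double role here: it supplies the key bound $M_{u}+M_{v}\leq x_{w}$ needed for $L\leq U$, and it also secures the correct parity of $L$ in the central sub-case where the extrema are attained by the middle expressions $M_{u}$ and $x_{w}-M_{v}$.
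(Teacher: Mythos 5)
Your argument is correct, but it follows a genuinely different route from the paper's. The paper invokes the Ohsugi--Hibi formula $\overline{S}=S+\mathbb{Z}_{\geq 0}\{\mathbb{E}_{C}+\mathbb{E}_{C^{\prime}}\}$, reduces to $\gamma=\mathbb{E}_{C}+\mathbb{E}_{C^{\prime}}$ for a single exceptional pair (since two such terms sum into $S$ by completeness), and then splits on $\alpha_{w}=0$ versus $\alpha_{w}>0$: the first case gives $\alpha\in A$ by a parity count, and the second is absorbed into $S$ via Lemma~\ref{lemma:1}. You instead work entirely from the H-description of the cone: you exploit the wedge structure of $G_{a,b}$ at $w$ to reduce membership in $S$ to splitting $x_{w}=x_{w}^{(a)}+x_{w}^{(b)}$ with each half landing in the (normal, explicitly described) semigroups of $K_{a+1}$ and $K_{b+1}$, and then show the resulting integer interval $[L,U]$ is nonempty via the facet inequalities and contains a point of the required parity except exactly when $x_{w}=0$ forces $L=U=0$ with $X_{u},X_{v}$ odd --- which is precisely membership in $A$. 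I checked the key steps: the splitting criterion is valid because the two cliques share only the vertex $w$; the nine pairings for $L\leq U$ do follow from the inequalities attached to the fundamental sets $\{w\}$, $\{u_{i}\}$, $\{v_{j}\}$, $\{u_{i},v_{j}\}$; and in every sub-case of $L=U$ other than $x_{w}=0$ the tight inequality together with the evenness of $X_{u}+X_{v}+x_{w}$ does force $L\equiv X_{u}\pmod 2$ (in particular $2x_{u_{i}}-X_{u}\equiv X_{u}$ is automatic whenever $L$ is attained by $M_{u}$). What the paper's route buys is brevity and reuse of Lemma~\ref{lemma:1}; what yours buys is independence from the normalization formula of \cite{OH} and, as a by-product, an exact combinatorial membership test for $S$ itself, which actually yields the stronger statement $\overline{S}=S\cup(A\cap\overline{S})$ rather than only the inclusion. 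The one presentational gap is that the parity bookkeeping is described rather than carried out; in a final write-up you should display the $L=U$ sub-cases explicitly, since that is where the set $A$ emerges.
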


\begin{proof}
Let $\alpha$ be an arbitrary element in $\overline{S}$. As we have seen in Section~\ref{sec:2}, the normalization of the edge ring $K[G_{a,b}]$ can be expressed as $$\overline{S}=S\ + \ \mathbb{Z}_{\geq 0}\big\{\mathbb{E}_{C}+\mathbb{E}_{C^{\prime}}\colon (C,C^{\prime}) \textrm{ is exceptional in } G_{a,b} \big\}.$$ Therefore, any $\alpha\in\overline{S}$ can be expressed as $\alpha =\beta + \gamma$, where we have $\beta\in S$ and $\gamma \in \mathbb{Z}_{\geq 0}\big\{\mathbb{E}_{C}+\mathbb{E}_{C^{\prime}}\colon (C,C^{\prime}) \textrm{ is exceptional in } G_{a,b}\big\}.$ If $\gamma = 0$, then $\alpha\in S$. So, let us consider the non-trivial case where $\gamma \neq 0$. Let $\alpha_{i}$, $\beta_{i}$ and $\gamma_{i}$ represent the $i^{\textrm{th}}$ coordinates of $\alpha$, $\beta$ and $\gamma$ respectively.

Since for any two (possibly identical) exceptional pairs $(C, C^{\prime}), (\overline{C}, \overline{C}^{\prime})$, it follows from the completeness of the graphs $K_{a+1}$ and $K_{b+1}$ that $\mathbb{E}_{C} + \mathbb{E}_{C^{\prime}}+ \mathbb{E}_{\overline{C}} + \mathbb{E}_{\overline{C}^{\prime}} \in S$. 
Therefore, without loss of generality, for an exceptional pair $(C, C^{\prime})$ in $G_{a,b}$, we may assume that $\gamma = \mathbb{E}_{C} + \mathbb{E}_{C^{\prime}}$. 

\noindent
\textbf{Case 1.}
Let $\alpha_{w}=0$. 

We have $\alpha_{w}=0$ and $\gamma \neq 0$ with $\gamma_{w}=0$. Therefore $\beta_{w}= 0$, that is, we are not considering any edge adjacent to the common vertex $w$. This assures that, both $\sum\limits_{u\in V(K_{a})}\beta_{u}$ and $\sum\limits_{v\in V(K_{b})}\beta_{v}$ have to be even. Hence, both $\sum\limits_{u\in V(K_{a})}\alpha_{u}$ and $\sum\limits_{v\in V(K_{b})}\alpha_{v}$ will be odd. Thus we have $\alpha\in A$ and therefore, $\alpha\in S\cup A$.

\noindent
\textbf{Case 2.}
Let $\alpha_{w}> 0$.

Consider an exceptional pair $(C, C^{\prime})$ in $G_{a,b}$. We have $\gamma =\mathbb{E}_{C} +\mathbb{E}_{C^{\prime}}$. The condition $\alpha_{w}> 0$ implies $\beta_{w}> 0$. This indicates that there must be at least one edge adjacent to the common vertex $w$, say $\{v,w\}$. For any exceptional pair $(C, C^{\prime})$ in $G_{a,b}$, by Lemma~\ref{lemma:1}, we have $\mathbb{E}_{C} +\mathbb{E}_{C^{\prime}} + \mathbf{e}_{v} +  \mathbf{e}_{w} \in S.$ Hence, $\alpha=\beta +\gamma\in S$. Thus, it proves that $\alpha\in S\cup A$. 
\end{proof}

All of the observations we have made so far lead us to the conclusion that, $$S\subset S^{\prime}\subset \overline{S}\subset S\cup A .$$

\begin{prop}\label{prop:1}
Let $G_{a,b}$ be a simple finite connected graph with $3\leq a\leq b$ and $|V(G_{a,b})|=a+b+1=d$, such that $G_{a,b}$ (Figure~\ref{fig:Gab}) consists of two complete graphs $K_{a+1}$ and $K_{b+1}$ joined at a common vertex $w$. Let $K[G_{a,b}]$ be the edge ring of the graph $G_{a,b}$. Then, $K[G_{a,b}]$ is non-normal and satisfies $(S_{2})$-condition.
\end{prop}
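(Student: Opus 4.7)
The plan is to handle the two assertions separately, using the preparatory lemmas in the section.

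For non-normality, since $3 \leq a \leq b$, I would pick a triangle $C$ on three of the vertices $u_{1},\dots,u_{a}$ and a triangle $C'$ on three of the vertices $v_{1},\dots,v_{b}$, so that $w \notin V(C) \cup V(C')$ and $V(C) \cap V(C') = \emptyset$. Because every edge of $G_{a,b}$ is contained in either $K_{a+1}$ or $K_{b+1}$ and these complete graphs share only the vertex $w$, there is no edge—and therefore no bridge—between $V(C) \subset V(K_{a+1}) \setminus \{w\}$ and $V(C') \subset V(K_{b+1}) \setminus \{w\}$. Thus $(C, C')$ is exceptional, and Theorem~\ref{thm:normal} immediately yields that $K[G_{a,b}]$ is non-normal.

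For the $(S_{2})$-condition, Lemma~\ref{lemma:2} gives the chain $S \subset S' \subset \overline{S} \subset S \cup A$, so it suffices to show $A \cap S' = \emptyset$; this will force $S' \subset S$ and hence $S' = S$. Since $a, b \geq 3$, both components of $G_{a,b} \setminus w = K_{a} \sqcup K_{b}$ contain a triangle, so $w$ is regular and $F_{w}$ is a facet with $S \cap F_{w} = \mathbb{Z}_{\geq 0}\mathcal{A}_{K_{a} \sqcup K_{b}}$. I would prove the stronger statement that already this single facet detects the obstruction, namely $A \cap S_{F_{w}} = \emptyset$. Suppose to the contrary that $\alpha \in A$ lies in $S_{F_{w}}$, so there exists $\mathbf{y} \in S \cap F_{w}$ with $\alpha + \mathbf{y} \in S$. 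Both $\alpha$ and $\mathbf{y}$ have $w$-coordinate $0$, hence so does $\alpha + \mathbf{y}$, which therefore must be a non-negative integer combination of $\rho(e)$ with $e \in E(K_{a}) \cup E(K_{b})$ only. Every such edge contributes $2$ to exactly one of the two partial sums $\sum_{u \in V(K_{a})} x_{u}$ and $\sum_{v \in V(K_{b})} x_{v}$, so both partial sums are even for $\alpha + \mathbf{y}$ and for $\mathbf{y}$; subtracting forces the same parity for $\alpha$, contradicting $\alpha \in A$.

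The heavy lifting has essentially been done in Lemmas~\ref{lemma:1} and~\ref{lemma:2}, which pin $\overline{S} \setminus S$ inside the parity set $A$. The only remaining insight—that one well-chosen facet $F_{w}$ already expels $A$ from $S'$—follows cleanly from the decomposition $G_{a,b} \setminus w = K_{a} \sqcup K_{b}$, so I do not anticipate a serious obstacle beyond recording the parity bookkeeping above and verifying that $w$ is regular.
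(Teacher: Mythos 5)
Your proposal is correct and follows essentially the same route as the paper: an exceptional pair of disjoint triangles (one in each complete graph, with no possible bridge) gives non-normality via Theorem~\ref{thm:normal}, and the $(S_2)$-condition is obtained by combining Lemma~\ref{lemma:2} with the single facet $F_{w}$ and the parity obstruction on the two coordinate sums over $V(K_{a})$ and $V(K_{b})$. The only cosmetic difference is that you phrase the key step as $A \cap S_{F_{w}} = \emptyset$ and argue by subtracting parities, whereas the paper observes directly that $\alpha + \beta$ stays in $A$ and hence outside $S$; these are the same computation.
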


\begin{proof}
Since $G_{a,b}$ is the union of two complete graphs with a common vertex $w$, it is assured that all  the pairs of odd cycles of the form ($\{u_{i},u_{i+1},u_{i+2}\},\{v_{j},v_{j+1},v_{j+2}\}$) where $1\leq i\leq a-2$ and $1\leq j\leq b-2$ are exceptional. Therefore, the graph $G_{a,b}$ does not satisfy the odd cycle condition and by Theorem~\ref{thm:normal}, we conclude that the edge ring $K[G_{a,b}]$ is non-normal.

Let us consider an element from the set $A$, say $\alpha = (x_{u_{1}},\dots , x_{u_{a}},0,x_{v_{1}},\dots , x_{v_{b}})$ such that $\alpha\in \overline{S}\backslash S$. We have seen that the common vertex $w$ is regular in $G_{a,b}$ and corresponding to this regular vertex we have $S\cap F_{w}=\mathbb{Z}_{\geq 0 }\mathcal{A}_{G_{a,b}\backslash w}$. For any $\beta\in S\cap F_{w},$ let $\beta_{i}$ represent the $i^{\textrm{th}}$ coordinate of $\beta$. We observe that $\beta_{w}=0$ and both $\sum\limits_{i=1}^{a}\beta_{u_{i}}$, and $\sum\limits_{j=1}^{b}\beta_{v_{j}}$ are even. Therefore, for all $\beta \in S\cap F_{w}$, we have $\alpha +\beta \in A$ and not in $S$. Thus, there exists no $\beta \in S\cap F_{w}$, such that $\alpha +\beta \in S$. Therefore, $\alpha \notin S_{w}$. Hence,
$$\alpha\notin \bigcap\limits_{F_{i}\in\mathcal{F}(G_{a,b})}S_{i} =S^{\prime}.$$
This implies that, for any $\alpha\in\overline{S}\backslash S$, we have $\alpha\notin S^{\prime}$. Hence, $(\overline{S}\backslash S) \cap S^{\prime}= \emptyset$ and $S^{\prime}\subset S.$ We know that $S\subset S^{\prime}.$ Therefore, $S^{\prime}=S.$
\end{proof}

We have proved that $K[G_{a,b}]$ is non-normal and satisfies $(S_{2})$-condition. Now, we are interested in modifying the graph $G_{a,b}$, to see how the behaviour of the corresponding edge ring varies.


\section{Removing edges of $G_{a,b}$ and $(S_{2})$-condition}\label{sec:4}

In Section~\ref{sec:3}, we have studied the graph $G_{a,b}$ in detail. In this section, we will investigate whether we can remove edges of $G_{a,b}$ to obtain $\widetilde{G}$, a subgraph of $G_{a,b}$, with $V(\widetilde{G})=V(G_{a,b})$ and $|E(\widetilde{G})|=d+1$ such that the edge ring $K[\widetilde{G}]$ is non-normal and satisfies $(S_{2})$-condition. We will modify a method of eliminating edges of $G_{a,b}$ so that the common vertex $w$ remains regular in any graph created using this method. By the end of this section, we prove that $K[\widetilde{G}]$ is non-normal and satisfies $(S_{2})$-condition.

By eliminating one edge from the graph $G_{a,b}$ per step, we will gradually build up $\widetilde{G}$. First of all, we will be removing edges of the graph $K_{a+1}$ and will not alter the graph $K_{b+1}$. 
Let us denote $u_{0} \coloneqq w$ and $G^{u_{1}}_{0} \coloneqq G_{a,b}$. We construct a new subgraph of $G_{a,b}$
through an edge removal process such that for each $1\leq i\leq a-3$, 
\begin{itemize}
   \item we remove the edge $\{u_{0},u_{i}\}$ from $G^{u_{i}}_{0}$ to obtain $G^{u_{i}}_{i}$, and
    \item remove the edge $\{u_{i},u_{j}\}$ from $G^{u_{i}}_{j-1}$ to obtain $G^{u_{i}}_{j}$,  $\forall \ i+1\leq j\leq a-1$.
\end{itemize}
Let us denote $G^{u_{i}}_{0}:=G^{u_{i-1}}_{a-1}$, $\forall\ 2\leq i\leq a-2$. By construction, we observe that $$E(G^{u_{i}}_{0}) = E(G_{a,b})\backslash \bigcup\limits_{p=1}^{i-1}\bigg\{\{u_{p},u_{q}\}\colon q\neq p, 0\leq q \leq a-1\bigg\}, \hspace{0.2cm} \forall \ 2\leq i\leq a-2.$$
For $i=a-2$, the construction of the subgraph $G^{u_{i}}_{j}$, $a-2\leq j \leq a-1$ is as follows:
\begin{itemize}
    \item the subgraph $G^{u_{a-2}}_{a-2}$ is constructed by removing the edge $\{u_{0},u_{a-2}\}$ from $G^{u_{a-2}}_{0}$;
    \item the edge $\{u_{0},u_{a-1}\}$ is removed from $G^{u_{a-2}}_{a-2}$, to obtain the subgraph $G^{u_{a-2}}_{a-1}$.
\end{itemize}

\begin{remark}\label{rem:0.1}
For $1\leq i\leq a-2$, $i\leq j\leq a-1$ and $( i, j)\neq (a-2,a-1)$, we have
\vfill
$\begin{array} {lcl} E(G^{u_{i}}_{j}) & = & E(G_{a,b})\backslash \big\{\{u_{p},u_{q}\}\colon q\neq p, (p,q)\in [a-1] \times [i-1] \\ 
&  &  \hspace{1.5cm}\cup \{(0,1),(0,2),\dots ,(0,i),(i,i+1),\dots,(i,j)\}\big\}.
\end{array}$
\vfill
\noindent
In particular, if $1\leq p< q \leq a$, then $\{u_{p},u_{q}\}\in E(G^{u_{i}}_{j})$ if and only if one of the following cases happens:

\begin{enumerate}[label=(\roman*)]
    \item $p\geq i+1;$
    \item $p=i$, $j+1\leq q\leq a;$
    \item $p\leq i-1$, $q=a.$
\end{enumerate}
If $1\leq p< q < r \leq a$, then $u_{p}, u_{q}, u_{r}$ form a triangle in $G^{u_{i}}_{j}$ if and only if either of the following cases happens:

\begin{enumerate}[label=(\alph*)]
    \item $p=i$, $j+1\leq q;$
    \item $p\geq i+1$.
\end{enumerate}
Moreover, for $1 \leq t \leq a$, we have $\{u_{t},w\}\in E(G^{u_{i}}_{j})$ if and only if $t \geq i+1$.
\end{remark}

\begin{remark}\label{rem:0.2}
For $( i, j) = (a-2,a-1)$, we have 
\vfill
$\begin{array} {lcl} E(G^{u_{a-2}}_{a-1}) & = & E(G_{a,b})\backslash \big\{\{u_{p},u_{q}\}\colon q\neq p, (p,q)\in [a-1] \times [a-3] \\ 
&  &  \hspace{1.5cm}\cup \{(0,1),(0,2),\dots ,(0,a-1)\}\big\}.
\end{array}$
\vfill
\noindent
In particular, if $1\leq p< q $, then $\{u_{p},u_{q}\}\in E(G^{u_{a-2}}_{a-1})$ if and only if one of the following cases happens: 

\begin{enumerate}[label=(\roman*)]
    \item $p\leq a-1$, $q= a;$
    \item $( p, q) = (a-2,a-1)$.
\end{enumerate}
If $1\leq p< q < r \leq a$, then $u_{p}, u_{q}, u_{r}$ form a triangle in $G^{u_{a-2}}_{a-1}$ if and only if $( p,q,r) = (a-2,a-1,a)$.
Moreover, $\{u_{t},w\}\in E(G^{u_{a-2}}_{a-1})$ if and only if $t = a$.
\end{remark}

An example of the sequence of subgraphs $G^{u_{i}}_{j}$; $1\leq i\leq 2$ and $i\leq j\leq 3$ that is constructed from the graph $G_{4,3}$ using the edge removal process defined above is illustrated in Figure \ref{fig:rmveg1}. 

%
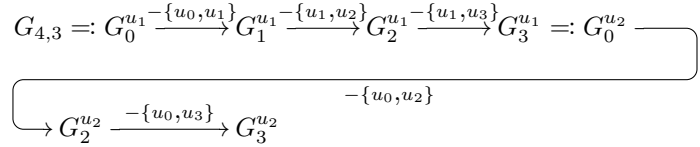
\begin{figure}[ht]
\begin{tikzcd}[arrow style=math font,cells={nodes={text height=2ex,text depth=1ex}}]
  {G_{4,3} \eqqcolon G^{u_{1}}_{0}} \arrow[r , "-\{u_{0} \CommaPunct u_{1}\}" ]
  \arrow[d, phantom, ""{coordinate, name=Z}]
  & G^{u_{1}}_{1} \arrow[r, "-\{u_{1} \CommaPunct u_{2}\}"]
    & G^{u_{1}}_{2} \arrow[r, "-\{u_{1} \CommaPunct u_{3}\}"]
      & G^{u_{1}}_{3}\eqqcolon G^{u_{2}}_{0} \arrow[dlll,
                 "-\{u_{0} \CommaPunct u_{2}\}" ,
                   rounded corners,
                    to path={ -- ([xshift=5ex]\tikztostart.east)
                    |- (Z) [near end]\tikztonodes
                    -| ([xshift=-3ex]\tikztotarget.west) -- (\tikztotarget)}] \\
    G^{u_{2}}_{2} \arrow[r, "-\{u_{0} \CommaPunct u_{3}\}"]
    & G^{u_{2}}_{3} 
\end{tikzcd}
\caption{A sequence of subgraphs constructed from $G_{4,3}$}
\label{fig:rmveg1}
\end{figure}

According to our construction, $V(G^{u_{i}}_{j})=V(G_{a,b})$, $\forall \ 1\leq i\leq a-2,\ i\leq j\leq a-1$. By the end of this entire process, we construct the subgraph of $G_{a,b}$, as shown in Figure~\ref{fig:remov}.

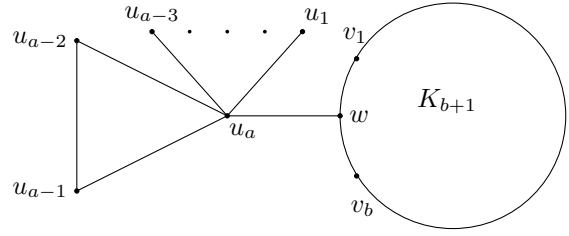
\begin{figure}[ht]
\centering
\begin{align*}
\begin{tikzpicture}
\filldraw[color=black!100, fill=white!5, thin](6,3) circle (1.5);
\draw[black, thin] (3,3) -- (1,2) -- (1,4)-- cycle;
\filldraw [black] (3,3) circle (0.8pt);
\filldraw [black] (1,2) circle (0.8pt);
\filldraw [black] (1,4) circle (0.8pt);
\filldraw [black] (4,4.12) circle (0.8pt);
\filldraw [black] (2,4.12) circle (0.8pt);
\filldraw [black] (3,4.12) circle (0.5pt);
\filldraw [black] (2.5,4.12) circle (0.5pt);
\filldraw [black] (3.5,4.12) circle (0.5pt);
\filldraw [black] (4.72,2.2) circle (0.8pt);
\filldraw [black] (4.72,3.76) circle (0.8pt);
\filldraw [black] (4.5,3) circle (0.8pt);
\draw[black, thin] (4.5,3) -- (3,3);
\draw[black, thin] (4,4.12) -- (3,3);
\draw[black, thin] (2,4.12) -- (3,3);
\filldraw [black] (4.8,2) node[anchor=north] {$v_{b}$};
\filldraw [black] (4.7,4.3) node[anchor=north] {$v_{1}$};
\filldraw [black] (4.2,4.1) node[anchor=south] {$u_{1}$};
\filldraw [black] (4.5,3) node[anchor=west] {$w$};
\filldraw [black] (5.4,3.2) node[anchor=west] {$K_{b+1}$};
\filldraw [black] (1,2) node[anchor=east] {$u_{a-1}$};
\filldraw [black] (1,4) node[anchor=east] {$u_{a-2}$};
\filldraw [black] (2,4.1) node[anchor=south] {$u_{a-3}$};
\filldraw [black] (2.9,2.8) node[anchor=west] {$u_{a}$};
\end{tikzpicture}
\end{align*}
\caption{The graph $G_{a-1}^{u_{a-2}}$}
\label{fig:remov}
\end{figure}

Let $C= \{u_{i_{1}}, u_{i_{2}},\dots , u_{i_{2l+1}}\}$ be an odd cycle in $G^{u_{i}}_{j}$, such that $2l+1 \geq 5$ and $ 1 \leq i_1 <i_{2} < \cdots < i_{2l+1}$. We have $\{u_{i_{1}}, u_{i_{2}}\} \in E(G^{u_{i}}_{j})$ and $ \{u_{i_{1}}, u_{i_{2l+1}}\}\in E(G^{u_{i}}_{j})$. Thus according to our construction, $\{ u_{i_{1}},u_{k}\}\in E(G^{u_{i}}_{j}),$ for all $i_{1}< k\leq i_{2l+1}$. Hence, for all $1\leq i\leq a-2$; $i\leq j\leq a-1$, the minimal odd cycles of $G^{u_{i}}_{j}$ are cycles of length three.

\begin{remark}\label{rem:rmv2}
Let $1\leq i\leq a-2$, $i\leq j\leq a-1$ be integers. Let $C= \{u_{i_{1}}, u_{i_{2}}, u_{i_{3}}\}$ be a triangle in $G^{u_{i}}_{j}$, where $1\leq i_{1}< i_{2} < i_{3}$. Then from Remarks \ref{rem:0.1} and \ref{rem:0.2}, $w$ is always adjacent to $ u_{i_{3}}$. Indeed, if $(i,j)\neq (a-2, a-1)$ then $w$ is even adjacent to both $u_{i_{2}}$ and $ u_{i_{3}}$. If $(i,j) = (a-2, a-1)$ then $(i_{1}, i_{2} , i_{3}) =(a-2, a-1, a)$ and $w$ is adjacent to $ u_{i_{3}} = u_{a}$.
\end{remark}


\begin{remark}\label{rem:rmv}
Let $1\leq i\leq a-2$, $i\leq j\leq a-1$ be integers.
Let $C$ and $\overline{C}$ be two (possibly identical) triangles in $G^{u_{i}}_{j}$.
Then by Remarks \ref{rem:0.1} and \ref{rem:0.2}, there is an edge connecting two vertices of $C$ and $\overline{C}$. 
Indeed, let $C= \{u_{i_{1}}, u_{i_{2}}, u_{i_{3}}\}$ and $\overline{C}= \{u_{j_{1}}, u_{j_{2}}, u_{j_{3}}\}$ where $1\leq i_{1}< i_{2} < i_{3}$, $1\leq j_{1}< j_{2} < j_{3}$. 
Then $i_{2}, j_{2} \geq i+1 $, so $u_{i_{2}}$ is adjacent to either $u_{j_{2}}$ or $u_{j_{3}}$.
\end{remark}

\begin{lemma}\label{lemma:1.1}
Let $1\leq i\leq a-2$, $i\leq j\leq a-1$ be integers.
Let $(C, C^{\prime})$ be an exceptional pair in $G^{u_{i}}_{j}$. If $\{w,v\}\in E(G^{u_{i}}_{j})$, then 
$$ \mathbb{E}_{C}\ +\ \mathbb{E}_{C^{\prime}}\ +\ \mathbf{e}_{w}\ + \ \mathbf{e}_{v} \in S_{G^{u_{i}}_{j}}.$$
\end{lemma}

\begin{proof}
Since $(C, C^{\prime})$ is an exceptional pair, we have $V(C)\cap V(C^{\prime})= \emptyset $ and $w\notin V(C)\cup V(C^{\prime})$.
By Remark \ref{rem:rmv}, we may assume that $V(C) = \{u_{i_{1}},u_{i_{2}},u_{i_{3}}\} \subset V(K_{a})$ and $V(C^{\prime}) = \{v_{j_{1}},v_{j_{2}},v_{j_{3}}\} \subset V(K_{b})$, where $i_{1}< i_{2} < i_{3}$, $ j_{1}< j_{2} < j_{3}$. 

\noindent
\textbf{Case 1.}
Let $v= u_{k}\in V(K_{a})$. 
We claim that $ \mathbb{E}_{C} + \mathbf{e}_{v} , \  \mathbb{E}_{C^{\prime}} + \mathbf{e}_{w}  \in S_{G^{u_{i}}_{j}}.$
By Remark \ref{rem:rmv2}, $w$ is adjacent to $v_{j_{3}}$, so 
$$\mathbb{E}_{C^{\prime}} + \mathbf{e}_{w} = \rho\big(\{w, v_{j_{3}}\}\big) + \rho\big(\{v_{j_{1}},v_{j_{2}}\}\big) \in S_{G^{u_{i}}_{j}}.$$
Since $\{u_{i_{1}},u_{i_{2}}\}$, $\{u_{i_{1}},u_{i_{3}}\} $ are edges and $i_{1}< i_{2} < i_{3}$, by Remarks \ref{rem:0.1} and \ref{rem:0.2}, $i_{1}\geq i$ and hence $i_{2}\geq i+1$. 
Since $\{w,u_{k}\}$ is an edge, by the same results, $k \geq i + 1$. 
Hence Remarks \ref{rem:0.1} and \ref{rem:0.2} imply that $u_{k}$ is adjacent to either $u_{i_{2}}$ or $u_{i_{3}}$. 
This implies $\mathbb{E}_{C} + \mathbf{e}_{v} \in S_{G^{u_{i}}_{j}}.$

\noindent
\textbf{Case 2.}
Let $v\in V(K_{b})$. We claim that $ \mathbb{E}_{C} + \mathbf{e}_{w}  , \  \mathbb{E}_{C^{\prime}} + \mathbf{e}_{v}  \in S_{G^{u_{i}}_{j}}.$
Since $K_{b}$ is complete, $\mathbb{E}_{C^{\prime}}  + \mathbf{e}_{v}  \in S_{G^{u_{i}}_{j}}.$ 
By Remark \ref{rem:rmv2}, $w$ is adjacent to $u_{i_{3}}$. 
Hence $ \mathbb{E}_{C} + \mathbf{e}_{w} \in S_{G^{u_{i}}_{j}}.$

\noindent
In both cases, we get the desired containment.
\end{proof}

For the graph $G^{u_{i}}_{j}$, where $1\leq i\leq a-2$, $i\leq j\leq a-1$ and the set $A$ as defined in Section \ref{sec:3}, we have the following lemma.

\begin{lemma}\label{lemma:2.1}
$\overline{S_{G^{u_{i}}_{j}}}\subset  S_{G^{u_{i}}_{j}}\cup A $, for all $1\leq i\leq a-2$ and $i\leq j\leq a-1$.
\end{lemma}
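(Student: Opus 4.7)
The plan is to mirror the proof of Lemma~\ref{lemma:2}, replacing the role played there by completeness of $K_{a+1}$ with the structural information supplied by Remarks~\ref{rem:rmv2} and \ref{rem:rmv}. Pick an arbitrary $\alpha \in \overline{S_{G^{u_i}_j}}$, and use the description of the normalization recalled in Section~\ref{sec:2} to write $\alpha = \beta + \gamma$ with $\beta \in S_{G^{u_i}_j}$ and $\gamma \in \mathbb{Z}_{\geq 0}\{\mathbb{E}_C + \mathbb{E}_{C'} : (C,C') \text{ exceptional in } G^{u_i}_j\}$. If $\gamma = 0$ we are immediately done, so we assume $\gamma \neq 0$.

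The first reduction is to show that any two (possibly identical) exceptional pairs $(C,C')$ and $(\overline{C},\overline{C}')$ combine so that $\mathbb{E}_C + \mathbb{E}_{C'} + \mathbb{E}_{\overline{C}} + \mathbb{E}_{\overline{C}'} \in S_{G^{u_i}_j}$. By Remark~\ref{rem:rmv2} every minimal odd cycle of $G^{u_i}_j$ is a triangle with at least one vertex adjacent to $w$; consequently, an exceptional pair must consist of one triangle in $V(K_a)$ and one in $V(K_b)$. For the two triangles $C, \overline{C}$ lying in $V(K_a)$, Remark~\ref{rem:rmv} furnishes a bridge $\{u_p,u_q\}$, which together with the opposite edges of $C$ and $\overline{C}$ expresses $\mathbb{E}_C + \mathbb{E}_{\overline{C}}$ as a sum of three $\rho(e)$'s; for $C', \overline{C}'$ in $V(K_b)$ the same conclusion follows from the unchanged completeness of $K_b$. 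Hence, modulo $S_{G^{u_i}_j}$, I may assume $\gamma = \mathbb{E}_C + \mathbb{E}_{C'}$ for a single exceptional pair.

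The argument now splits on $\alpha_w$. If $\alpha_w = 0$, then since $\gamma_w = 0$ we must have $\beta_w = 0$, so $\beta$ is a non-negative combination of $\rho(e)$ with $e \subset V(K_a)$ or $e \subset V(K_b)$, forcing both $\sum_{u\in V(K_a)}\beta_u$ and $\sum_{v\in V(K_b)}\beta_v$ to be even. Adding the contribution of $\gamma$, which adds $3$ to each of these sums, flips both parities to odd, placing $\alpha \in A$. If instead $\alpha_w > 0$, then $\beta_w > 0$, so some edge $\{v,w\}$ appears in the expansion of $\beta$, and Lemma~\ref{lemma:1.1} supplies $\mathbb{E}_C + \mathbb{E}_{C'} + \mathbf{e}_v + \mathbf{e}_w \in S_{G^{u_i}_j}$; rewriting $\alpha = (\beta - \rho(\{v,w\})) + (\mathbb{E}_C + \mathbb{E}_{C'} + \mathbf{e}_v + \mathbf{e}_w)$ then places $\alpha$ in $S_{G^{u_i}_j}$.

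The main obstacle, compared with the proof of Lemma~\ref{lemma:2}, is this first reduction: because $K_{a+1}$ is no longer complete, one cannot take the existence of an edge between any two triangles for granted. This is precisely the gap that Remark~\ref{rem:rmv} was set up to bridge, and verifying carefully that it delivers the required decomposition of $\mathbb{E}_C + \mathbb{E}_{\overline{C}}$ for every admissible pair of triangles in $V(K_a)$ is where the bookkeeping lives.
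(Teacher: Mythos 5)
Your proposal is correct and follows essentially the same route as the paper's own proof: the same decomposition $\alpha=\beta+\gamma$, the same reduction of $\gamma$ to a single $\mathbb{E}_C+\mathbb{E}_{C'}$ via Remark~\ref{rem:rmv} (for the two triangles in $V(K_a)$) and the completeness of $K_b$ (for those in $V(K_b)$), and the same case split on $\alpha_w$ using the parity argument and Lemma~\ref{lemma:1.1}. If anything, you are slightly more explicit than the paper in spelling out the bridge decomposition and the final rewriting $\alpha=(\beta-\rho(\{v,w\}))+(\mathbb{E}_C+\mathbb{E}_{C'}+\mathbf{e}_v+\mathbf{e}_w)$.
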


\begin{proof}
Let $\alpha$ be an arbitrary element in $\overline{S_{G^{u_{i}}_{j}}}$, where $1\leq i\leq a-2$, $i\leq j\leq a-1$. The normalization of the semigroup $S_{G^{u_{i}}_{j}}$ can be expressed as $$\overline{S_{G^{u_{i}}_{j}}}=S_{G^{u_{i}}_{j}}\ + \ \mathbb{Z}_{\geq 0}\big\{\mathbb{E}_{C}+\mathbb{E}_{C^{\prime}}\colon (C,C^{\prime}) \textrm{ is exceptional in } G^{u_{i}}_{j} \big\}.$$ 
Therefore, any $\alpha \in \overline{S_{G^{u_{i}}_{j}}}$ can be expressed as $\alpha =\beta + \gamma$, where $\beta\in S_{G^{u_{i}}_{j}}$ and $\gamma \in \mathbb{Z}_{\geq 0}\big\{\mathbb{E}_{C}+\mathbb{E}_{C^{\prime}}\colon (C,C^{\prime}) \textrm{ is exceptional in } G^{u_{i}}_{j}\big\}$.
If $\gamma = 0$, then $\alpha\in S_{G^{u_{i}}_{j}}$. 
So, let us consider the non-trivial case where $\gamma \neq 0$. Let $\alpha_{k}$, $\beta_{k}$ and $\gamma_{k}$ represent the $k^{\textrm{th}}$ coordinates of $\alpha$, $\beta$ and $\gamma$ respectively.

For any two (possibly identical) exceptional pairs $(C, C^{\prime}), (\overline{C}, \overline{C}^{\prime})$, using Remark \ref{rem:rmv} and from the completeness of $K_{b+1}$, we have $\mathbb{E}_{C} + \mathbb{E}_{C^{\prime}}+ \mathbb{E}_{\overline{C}} + \mathbb{E}_{\overline{C}^{\prime}} \in S_{G^{u_{i}}_{j}}$ for all $1\leq i\leq a-2$ and $i \leq j\leq a-1$.
Therefore, without loss of generality, for an exceptional pair $(C, C^{\prime})$ in $G^{u_{i}}_{j}$, we may assume that $\gamma = \mathbb{E}_{C} + \mathbb{E}_{C^{\prime}}$. 

\noindent
\textbf{Case 1.}
Let $\alpha_{w}=0$. 

We have $\gamma_{w}=0$ and $\alpha_{w}=0$. Therefore $\beta_{w}= 0$, that is, we are not considering any edge adjacent to the common vertex $w$. This assures that, both $\sum\limits_{u\in V(K_{a})}\beta_{u}$ and $\sum\limits_{v\in V(K_{b})}\beta_{v}$ have to be even. Hence, both $\sum\limits_{u\in V(K_{a})}\alpha_{u}$ and $\sum\limits_{v\in V(K_{b})}\alpha_{v}$ will be odd. Thus we have $\alpha\in A$.

\noindent
\textbf{Case 2.}
Let $\alpha_{w}> 0$.

The condition $\alpha_{w}> 0$ implies $\beta_{w}> 0$. 
This indicates that among the edges defining the vector $\beta$, there must be at least one edge adjacent to $w$, say $\{w,v\}$. 
For any exceptional pair $(C, C^{\prime})$ in $G^{u_{i}}_{j}$, by Lemma~\ref{lemma:1.1}, $ \mathbb{E}_{C} +\mathbb{E}_{C^{\prime}} + \mathbf{e}_{w} +  \mathbf{e}_{v} \in S_{G^{u_{i}}_{j}},$ and thus $\alpha=\beta +\gamma\in S_{G^{u_{i}}_{j}}$. 
\end{proof}

Therefore, for all $ 1\leq i\leq a-2$, and $i\leq j\leq a-1$, we can observe that, $$S_{G^{u_{i}}_{j}}\subset S_{G^{u_{i}}_{j}}^{\prime}\subset \overline{S_{G^{u_{i}}_{j}}}\subset  S_{G^{u_{i}}_{j}}\cup A .$$

\begin{prop}\label{prop:2}
The edge ring $K[G^{u_{i}}_{j}]$ of the graph $G^{u_{i}}_{j}$ is non-normal and satisfies $(S_{2})$-condition, for all $1\leq i\leq a-2$ and $i\leq j\leq a-1$.
\end{prop}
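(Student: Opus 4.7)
The plan is to mirror the two-step argument used for Proposition~\ref{prop:1}, now leaning on Lemma~\ref{lemma:2.1} in place of Lemma~\ref{lemma:2}. Throughout I abbreviate $G=G^{u_i}_j$ and $S=S_{G^{u_i}_j}$.

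For non-normality, I would exhibit a concrete exceptional pair that survives the whole removal process. The triangle $C=\{u_{a-2},u_{a-1},u_a\}$ is present in every $G^{u_i}_j$: the construction only deletes edges of the form $\{u_0,u_p\}$ or $\{u_p,u_q\}$ with $p\leq a-3$, together with $\{u_0,u_{a-2}\}$ and $\{u_0,u_{a-1}\}$, so no edge among $u_{a-2},u_{a-1},u_a$ is ever removed. Since $K_{b+1}$ is untouched, any triangle $C'\subset V(K_b)$ also persists. The pair $(C,C')$ consists of vertex-disjoint minimal odd cycles avoiding $w$, and every path from $V(K_a)$ to $V(K_b)$ in $G$ must pass through $w$, so no bridge can connect $C$ and $C'$. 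Thus $(C,C')$ is exceptional, and Theorem~\ref{thm:normal} forces $K[G]$ to be non-normal.

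For the $(S_2)$-condition, Lemma~\ref{lemma:2.1} reduces the task to showing that every $\alpha\in A\cap\overline{S}$ fails to lie in $S'$, and I would re-run the facet argument from Proposition~\ref{prop:1} using the hyperplane $\mathcal{H}_w$. The step that requires actual verification is the regularity of $w$ in $G$: on the $K_b$-side, the untouched clique $K_b$ is connected and contains triangles (since $b\geq 3$); on the $K_a$-side, the edges $\{u_k,u_a\}$ for $1\leq k\leq a-1$ are never deleted by the protocol, so the $K_a$-portion of $G\setminus w$ is connected through $u_a$, and the persistent triangle $\{u_{a-2},u_{a-1},u_a\}$ supplies the required odd cycle. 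Once $w$ is regular, $S\cap F_w=\mathbb{Z}_{\geq 0}\mathcal{A}_{G\setminus w}$. Any $\beta$ in this set uses only edges not incident to $w$, so $\beta_w=0$ and both $\sum_{u\in V(K_a)}\beta_u$ and $\sum_{v\in V(K_b)}\beta_v$ are even; consequently $A\cap S=\emptyset$. For $\alpha\in A$, adding such a $\beta$ leaves both parity sums odd, so $\alpha+\beta\in A$ and hence $\alpha+\beta\notin S$. This shows $\alpha\notin S_w$, so $\alpha\notin S'$. Combined with $S\subset S'$, we conclude $S'=S$.

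The only nontrivial obstacle in this plan is tracking the edge-removal protocol carefully enough to confirm that $w$ stays regular throughout; everything else is a direct translation of the Section~\ref{sec:3} argument, delivered by Lemmas~\ref{lemma:1.1} and~\ref{lemma:2.1}.
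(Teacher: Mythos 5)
Your proposal is correct and follows essentially the same route as the paper: non-normality via the persistent exceptional pair $(\{u_{a-2},u_{a-1},u_a\},C')$ with $C'$ a triangle in the untouched $K_b$, and the $(S_2)$-condition via Lemma~\ref{lemma:2.1} together with the parity obstruction on the facet $F_w$ of the regular vertex $w$. Your explicit verification that $w$ remains regular (connectivity of the $K_a$-side of $G^{u_i}_j\setminus w$ through $u_a$ and the surviving triangle) fills in a step the paper only asserts, but the argument is the same.
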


\begin{proof}
For any $1\leq k\leq b-2$, the pair $(\{u_{a-2},u_{a-1},u_{a}\},\{v_{k},v_{k+1},v_{k+2}\})$ is always exceptional in $G^{u_{i}}_{j}$. Hence, $K[G^{u_{i}}_{j}]$ is always non-normal.

Let us consider an element $\alpha\in \overline{S_{G_{j}^{u_{i}}}} \backslash S_{G_{j}^{u_{i}}}$. By Lemma \ref{lemma:2.1}, we have $\alpha \in A $ and $\alpha_{w}=0$. We observe that, the common vertex $w$ is regular in $G^{u_{i}}_{j}$. 
Hence, corresponding to $w$, we have $S_{G^{u_{i}}_{j}}\cap F_{w}:=\mathbb{Z}_{\geq 0 }\mathcal{A}_{G^{u_{i}}_{j}\backslash w}$.
For any $\beta\in S_{G_{j}^{u_{i}}}\cap F_{w},$ let $\beta_{k}$ be the $k^{\textrm{th}}$ coordinate of $\beta$. We observe that $\beta_{w}=0$ and both $\sum\limits_{i=1}^{a}\beta_{u_{i}}$, and $\sum\limits_{j=1}^{b}\beta_{v_{j}}$ are even. 
Therefore, for all $\beta \in S_{G_{j}^{u_{i}}}\cap F_{w}$, we have $\alpha +\beta \in A$ and not in $S_{G_{j}^{u_{i}}}$.
Thus, there exists no $\beta \in S_{G_{j}^{u_{i}}}\cap F_{w}$, such that $\alpha +\beta \in S_{G_{j}^{u_{i}}}$, and this implies that, $\alpha\notin S_{G_{j}^{u_{i}}}^{\prime}$.
As a result, we have $(\overline{S_{G_{j}^{u_{i}}}}\backslash S_{G_{j}^{u_{i}}}) \cap S^{\prime}_{G_{j}^{u_{i}}}= \emptyset$ and $S_{G_{j}^{u_{i}}}^{\prime}\subset S_{G_{j}^{u_{i}}}$. Therefore, $S^{\prime}_{G^{u_{i}}_{j}}=S_{G^{u_{i}}_{j}}.$
\end{proof}

Now, let us continue a similar edge removal process on $K_{b+1}$ and remove the maximum number of edges from $K_{b+1}$ resulting in the formation of the graph $\widetilde{G}$, as per our requirement.
Let $v_{0} \coloneqq w$ and $\widetilde{G}^{v_{1}}_{0}:=G^{u_{a-2}}_{a-1}$. We construct a new subgraph of $G_{a,b}$ through an edge removal process such that for each $1\leq i\leq b-3$, 
\begin{itemize}
   \item we remove the edge $\{v_{0},v_{i}\}$ from $\widetilde{G}^{v_{i}}_{0}$ to obtain $\widetilde{G}^{v_{i}}_{i}$, and
    \item remove the edge $\{v_{i},v_{j}\}$ from $\widetilde{G}^{v_{i}}_{j-1}$ to obtain $\widetilde{G}^{v_{i}}_{j}$,  $\forall \ i+1\leq j\leq b-1$.
\end{itemize}
Let us denote $\widetilde{G}^{v_{i}}_{0}:=\widetilde{G}^{v_{i-1}}_{b-1}$, $\forall\ 2\leq i\leq b-2$. By construction, we observe that $$E(\widetilde{G}^{v_{i}}_{0}) = E(\widetilde{G}^{v_{1}}_{0})\backslash \bigcup\limits_{p=1}^{i-1}\bigg\{\{v_{p},v_{q}\}\colon q\neq p, 0\leq q \leq b-1\bigg\}, \hspace{0.2cm} \forall \ 2\leq i\leq b-2.$$
For $i=b-2$, the construction of the subgraph $\widetilde{G}^{v_{i}}_{j}$, $b-2\leq j \leq b-1$ is as follows:
\begin{itemize}
    \item the subgraph $\widetilde{G}^{v_{b-2}}_{b-2}$ is constructed by removing the edge $\{v_{0},v_{b-2}\}$ from $\widetilde{G}^{v_{b-2}}_{0}$, and 
    \item the edge $\{v_{0},v_{b-1}\}$ is removed from $\widetilde{G}^{v_{b-2}}_{b-2}$, to obtain the subgraph $\widetilde{G}^{v_{b-2}}_{b-1}$.
\end{itemize}

As per construction, $V(\widetilde{G}^{v_{i}}_{j})=V(G_{a,b})$, $\forall \ 1\leq i\leq b-2,\ i\leq j\leq b-1$ and by the end of this removal procedure, we construct the graph depicted in Figure~\ref{fig:rmvd}.

\begin{figure}[ht]
\centering
\begin{align*}
\begin{tikzpicture}
\draw[black, thin] (3,3) -- (1,2) -- (1,4)-- cycle;
\draw[black, thin] (7,3) -- (9,2) -- (9,4)-- cycle;
\filldraw [black] (3,3) circle (0.8pt);
\filldraw [black] (1,2) circle (0.8pt);
\filldraw [black] (1,4) circle (0.8pt);
\filldraw [black] (7,3) circle (0.8pt);
\filldraw [black] (8,4.12) circle (0.8pt);
\draw[black, thin] (8,4.12) -- (7,3);
\filldraw [black] (6,4.12) circle (0.8pt);
\draw[black, thin] (6,4.12) -- (7,3);
\filldraw [black] (9,2) circle (0.8pt);
\filldraw [black] (9,4) circle (0.8pt);
\filldraw [black] (4,4.12) circle (0.8pt);
\filldraw [black] (2,4.12) circle (0.8pt);
\filldraw [black] (3,4.12) circle (0.5pt);
\filldraw [black] (2.5,4.12) circle (0.5pt);
\filldraw [black] (3.5,4.12) circle (0.5pt);
\filldraw [black] (6.5,4.12) circle (0.5pt);
\filldraw [black] (7.5,4.12) circle (0.5pt);
\filldraw [black] (7,4.12) circle (0.5pt);
\filldraw [black] (5,3) circle (0.8pt);
\draw[black, thin] (7,3) -- (3,3);
\draw[black, thin] (4.5,3) -- (3,3);
\draw[black, thin] (4,4.12) -- (3,3);
\draw[black, thin] (2,4.12) -- (3,3);
\filldraw [black] (7,3) node[anchor=north] {$v_{b}$};
\filldraw [black] (6,4.6) node[anchor=north] {$v_{1}$};
\filldraw [black] (9,2) node[anchor=west] {$v_{b-1}$};
\filldraw [black] (8,4.12) node[anchor=south] {$v_{b-3}$};
\filldraw [black] (9,4) node[anchor=west] {$v_{b-2}$};
\filldraw [black] (4.2,4.1) node[anchor=south] {$u_{1}$};
\filldraw [black] (5,3) node[anchor=south] {$w$};
\filldraw [black] (1,2) node[anchor=east] {$u_{a-1}$};
\filldraw [black] (1,4) node[anchor=east] {$u_{a-2}$};
\filldraw [black] (2,4.1) node[anchor=south] {$u_{a-3}$};
\filldraw [black] (2.9,2.8) node[anchor=west] {$u_{a}$};
\end{tikzpicture}
\end{align*}
\caption{The graph $\widetilde{G}$}
\label{fig:rmvd}
\end{figure}

\begin{remark}\label{rem:0.3}
For $1\leq i\leq b-2$, $i\leq j\leq b-1$ and $( i, j)\neq (b-2,b-1)$, we have

\smallskip

$\begin{array} {lcl} E(\widetilde{G}^{v_{i}}_{j}) & = & E(\widetilde{G}^{v_{1}}_{0})\backslash \big\{\{v_{p},v_{q}\}\colon q\neq p, (p,q)\in [b-1] \times [i-1] \\ 
&  &  \hspace{1.5cm}\cup \{(0,1),(0,2),\dots ,(0,i),(i,i+1),\dots,(i,j)\}\big\}.
\end{array}$
\smallskip

\noindent
In particular, if $1\leq p< q \leq b$, then $\{v_{p},v_{q}\}\in E(\widetilde{G}^{v_{i}}_{j})$ if and only if one of the following cases happens:
\begin{enumerate}[label=(\roman*)]
    \item $p\geq i+1;$
    \item $p=i$, $j+1\leq q\leq b;$
    \item $p\leq i-1$, $q=b$.
\end{enumerate}
If $1\leq p< q < r \leq b$, then $v_{p}, v_{q}, v_{r}$ form a triangle in $\widetilde{G}^{v_{i}}_{j}$ if and only if either of the following cases happens:

\begin{enumerate}[label=(\alph*)]
    \item $p=i$, $j+1\leq q;$
    \item $p\geq i+1$.
\end{enumerate}
Moreover, for $1 \leq t \leq b$, we have $\{v_{t},w\}\in E(\widetilde{G}^{v_{i}}_{j})$ if and only if $t \geq i+1$.
\end{remark}

\begin{remark}\label{rem:0.4}
For $( i, j) = (b-2,b-1)$, we have 

\smallskip

$\begin{array} {lcl} E(\widetilde{G}^{v_{b-2}}_{b-1}) & = & E(\widetilde{G}^{v_{1}}_{0})\backslash \big\{\{v_{p},v_{q}\}\colon q\neq p, (p,q)\in [b-1] \times [b-3] \\ 
&  &  \hspace{1.5cm}\cup \{(0,1),(0,2),\dots ,(0,b-1)\}\big\}.
\end{array}$
\smallskip

\noindent
In particular, if $1\leq p< q $, then $\{v_{p},v_{q}\}\in E(\widetilde{G}^{v_{b-2}}_{b-1})$ if and only if one of the following cases happens: 

\begin{enumerate}[label=(\roman*)]
    \item $p\leq b-1$, $q= b;$
    \item $( p, q) = (b-2,b-1)$.
\end{enumerate}
If $1\leq p< q < r \leq b$, then $v_{p}, v_{q}, v_{r}$ form a triangle in $\widetilde{G}^{v_{b-2}}_{b-1}$ if and only if $( p,q,r) = (b-2,b-1,b)$.
Moreover, $\{v_{t},w\}\in E(\widetilde{G}^{v_{b-2}}_{b-1})$ if and only if $t = b$.
\end{remark}


\begin{remark}\label{rem:0.5}
From Remarks \ref{rem:0.3} and \ref{rem:0.4}, we see that the minimal odd cycles of $\widetilde{G}^{v_{i}}_{j}$ are triangles. Let $C$ be a triangle of $\widetilde{G}^{v_{i}}_{j}$, we claim that a vertex of $C$ is adjacent to $w$. 
If $V (C) \subseteq V (K_{a})$, as $\widetilde{G}^{v_{i}}_{j}$ is a subgraph of $\widetilde{G}^{v_{1}}_{0} = G^{u_{a-2}}_{a-1}$, we must have $C = \{u_{a-2}, u_{a-1}, u_{a}\}$.
In this case, $w$ adjacent to $u_{a}$. 

\noindent
If $C$ is a subgraph of $K_{b}$, let its vertices be $v_{j_{1}}, v_{j_{2}}, v_{j_{3}}$ where $1 \leq j_{1}< j_{2} < j_{3}\leq b$. 
Then $w$ is adjacent to $v_{j_{3}}$, as Remarks \ref{rem:0.3} and \ref{rem:0.4} implies that $j_{1} \geq i$ and $i+1 \leq j_{2} < j_{3}$. 
In both cases, a vertex of $C$ is adjacent to $w$.

\noindent
Moreover, for any two (possibly identical) triangles $C$ and $\overline{C}$ of $\widetilde{G}^{v_{i}}_{j}$, whose vertices are inside $K_{b}$, there is an edge of $\widetilde{G}^{v_{i}}_{j}$ connecting a vertex of $C$ to a vertex of $\overline{C}$.
\end{remark}

\begin{lemma}\label{lemma:1.2}
Let us consider an exceptional pair $(C, C^{\prime})$ in $\widetilde{G}^{v_{i}}_{j}$, where $1\leq i\leq b-2$ and $i\leq j\leq b-1$. 
If $\{w,v\}\in E(\widetilde{G}^{v_{i}}_{j})$, then we have
$$ \mathbb{E}_{C}\ +\ \mathbb{E}_{C^{\prime}}\ +\ \mathbf{e}_{w}\ + \ \mathbf{e}_{v} \in S_{\widetilde{G}^{v_{i}}_{j}}.$$
\end{lemma}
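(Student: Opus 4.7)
The plan is to mirror the proof of Lemma~\ref{lemma:1.1}, now exploiting the structural observations recorded immediately after Figure~\ref{fig:rmvd} for the $K_{b+1}$ side. Since $w$ is a cut vertex of $\widetilde{G}^{v_i}_j$, any exceptional pair $(C,C')$ has $w\notin V(C)\cup V(C')$ and, up to relabeling, $V(C)\subset V(K_a)$ and $V(C')\subset V(K_b)$. The observations after Figure~\ref{fig:rmvd} then pin down the combinatorics completely: every minimal odd cycle has length three, the only such cycle inside $V(K_a)$ is $C=\{u_{a-2},u_{a-1},u_a\}$ with $u_a$ adjacent to $w$, and writing $C'=\{v_{j_1},v_{j_2},v_{j_3}\}$ with $j_1<j_2<j_3$ the vertex $v_{j_3}$ is adjacent to $w$.

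I would split by whether $v\in V(K_a)$ or $v\in V(K_b)$. In the former case, since $u_a$ is the only vertex of $V(K_a)$ still adjacent to $w$ after the $K_{a+1}$-side removals, necessarily $v=u_a$, and the decomposition
\[
\mathbb{E}_C+\mathbb{E}_{C'}+\mathbf{e}_w+\mathbf{e}_{u_a}=\rho(\{u_a,u_{a-2}\})+\rho(\{u_a,u_{a-1}\})+\rho(\{v_{j_1},v_{j_2}\})+\rho(\{w,v_{j_3}\})
\]
settles the matter: the first two edges come from the triangle $C$, the third from the triangle $C'$, and the fourth from $v_{j_3}\sim w$.

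In the latter case, write $v=v_k$. I would pre-commit the $K_a$-side contribution as $\rho(\{u_{a-2},u_{a-1}\})+\rho(\{w,u_a\})$, which absorbs $\mathbb{E}_C+\mathbf{e}_w$, and reduce the problem to writing $\mathbf{e}_{v_{j_1}}+\mathbf{e}_{v_{j_2}}+\mathbf{e}_{v_{j_3}}+\mathbf{e}_{v_k}$ as a sum of two $\rho(e)$'s on the $K_b$ side. If $v_k\in V(C')$, say $v_k=v_{j_\ell}$, the doubled vertex pairs with each of the other two triangle vertices to give $\rho(\{v_{j_\ell},v_{j_m}\})+\rho(\{v_{j_\ell},v_{j_n}\})$, with $\{m,n\}=\{1,2,3\}\setminus\{\ell\}$. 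If $v_k\notin V(C')$, I would use the pairing $\rho(\{v_{j_1},v_{j_2}\})+\rho(\{v_{j_3},v_k\})$.

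The main obstacle is justifying the edge $\{v_{j_3},v_k\}$ in this last step. I would address it through the mirror form of Remark~\ref{rem:rmv} on the $K_{b+1}$ side, namely that $\{w,v_\ell\}\in E(\widetilde{G}^{v_i}_j)$ implies $\{v_\ell,v_{\ell'}\}\in E(\widetilde{G}^{v_i}_j)$ for all $\ell<\ell'\leq b$, which is inherent in the symmetric edge-removal scheme that defines $\widetilde{G}^{v_i}_j$. Applying this to whichever of $k$ and $j_3$ is the smaller of the two (the case $k=j_3$ having been absorbed into $v_k\in V(C')$) produces the required edge. Once this symmetric statement is recorded, the remainder is the same routine bookkeeping as in Case~2 of Lemma~\ref{lemma:1.1}, and the conclusion $\mathbb{E}_C+\mathbb{E}_{C'}+\mathbf{e}_w+\mathbf{e}_v\in S_{\widetilde{G}^{v_i}_j}$ follows for every $\{w,v\}\in E(\widetilde{G}^{v_i}_j)$ and every exceptional pair in the stated range of $(i,j)$.
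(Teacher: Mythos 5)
Your proposal is correct and follows essentially the same route as the paper: split on whether $v$ lies in $V(K_a)$ (forcing $v=u_a$) or in $V(K_b)$, and in the latter case transplant the Case-1 argument of Lemma~\ref{lemma:1.1} to the $K_{b+1}$ side. In fact you make explicit the one point the paper leaves implicit when it says the second case is ``similar to Lemma~\ref{lemma:1.1}'', namely the mirror of Remark~\ref{rem:rmv} (that $\{w,v_\ell\}\in E(\widetilde{G}^{v_i}_j)$ forces $\{v_\ell,v_{\ell'}\}\in E(\widetilde{G}^{v_i}_j)$ for $\ell<\ell'\leq b$), which is exactly what justifies the edge $\{v_{j_3},v_k\}$.
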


\begin{proof}
By Remark \ref{rem:0.5}, we may assume that $V (C) \subseteq V (K_{a})$ and $V (C^{\prime}) \subseteq V (K_{b})$. The same remark implies that $C = \{u_{a-2}, u_{a-1}, u_{a}\}$. Let the vertices of $C^{\prime}$ be $v_{j_{1}}, v_{j_{2}}, v_{j_{3}}$ where $1 \leq j_{1}< j_{2} < j_{3}\leq b$.

\noindent
\textbf{Case 1.}
$v\in V(K_{a})$. 
We claim that $ \mathbb{E}_{C} + \mathbf{e}_{v} , \  \mathbb{E}_{C^{\prime}} + \mathbf{e}_{w}  \in S_{\widetilde{G}^{v_{i}}_{j}}.$
Given $\{w,v\}\in E(\widetilde{G}^{v_{i}}_{j})$, as per the construction of $\widetilde{G}^{v_{i}}_{j}$, $v=u_{a}$. Since $C = \{u_{a-2}, u_{a-1}, u_{a}\}$, we see that $v$ is adjacent to both $u_{a-2}$ and $u_{a-1}$, so $\mathbb{E}_{C} + \mathbf{e}_{v} \in S_{\widetilde{G}^{v_{i}}_{j}}.$
By Remark \ref{rem:0.5}, $w$ is adjacent to a vertex of $C^{\prime}$, hence $\mathbb{E}_{C^{\prime}} + \mathbf{e}_{w} \in  S_{\widetilde{G}^{v_{i}}_{j}}.$

\noindent
\textbf{Case 2.}
$v = v_{k}\in V(K_{b})$. We claim that $ \mathbb{E}_{C} + \mathbf{e}_{w}  , \  \mathbb{E}_{C^{\prime}} + \mathbf{e}_{v}  \in S_{\widetilde{G}^{v_{i}}_{j}}.$
Since $w$ is adjacent to $u_{a}$, $\mathbb{E}_{C} + \mathbf{e}_{w} \in  S_{\widetilde{G}^{v_{i}}_{j}}$. Since $w$ is adjacent to $v_{k}$, by Remarks \ref{rem:0.3} and \ref{rem:0.4}, $k \geq i+1$. The same remarks imply that $j_{1} \geq i,\ j_{2} \geq i+1$. Hence $v_{k}$ is adjacent to either $v_{j_{2}}$ or $v_{j_{3}}$. This yields $\mathbb{E}_{C^{\prime}} + \mathbf{e}_{v}  \in S_{\widetilde{G}^{v_{i}}_{j}}$.

\noindent
In both cases, we get the desired containment.
\end{proof}

For the set $A$ as defined in Section \ref{sec:3} and the graph $\widetilde{G}^{v_{i}}_{j}$, where $1\leq i\leq b-2$, and $i\leq j\leq b-1$, we have the following lemma.

\begin{lemma}\label{lemma:2.2}
$\overline{S_{\widetilde{G}^{v_{i}}_{j}}}\subset  S_{\widetilde{G}^{v_{i}}_{j}}\cup A $, for all $1\leq i\leq b-2$ and $i\leq j\leq b-1$.
\end{lemma}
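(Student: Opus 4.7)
The plan is to mirror the proof of Lemma~\ref{lemma:2.1} verbatim, simply adapting its ingredients to the graph $\widetilde{G}^{v_{i}}_{j}$. As in Section~\ref{sec:2}, every $\alpha\in\overline{S_{\widetilde{G}^{v_{i}}_{j}}}$ decomposes as $\alpha=\beta+\gamma$ where $\beta\in S_{\widetilde{G}^{v_{i}}_{j}}$ and $\gamma\in\mathbb{Z}_{\geq 0}\big\{\mathbb{E}_{C}+\mathbb{E}_{C^{\prime}}\colon (C,C^{\prime})\textrm{ is exceptional in }\widetilde{G}^{v_{i}}_{j}\big\}$. The case $\gamma=0$ gives $\alpha\in S_{\widetilde{G}^{v_{i}}_{j}}$ immediately, so I would assume $\gamma\neq 0$.

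First I would reduce to the case $\gamma=\mathbb{E}_{C}+\mathbb{E}_{C^{\prime}}$ for a single exceptional pair by showing that for any two exceptional pairs $(C,C^{\prime})$ and $(\overline{C},\overline{C}^{\prime})$ in $\widetilde{G}^{v_{i}}_{j}$, the sum $\mathbb{E}_{C}+\mathbb{E}_{C^{\prime}}+\mathbb{E}_{\overline{C}}+\mathbb{E}_{\overline{C}^{\prime}}$ lies in $S_{\widetilde{G}^{v_{i}}_{j}}$. By construction, the unique minimal odd cycle of $\widetilde{G}^{v_{i}}_{j}$ with vertex set inside $V(K_{a})$ is $\{u_{a-2},u_{a-1},u_{a}\}$, so both $C$ and $\overline{C}$ must coincide with this triangle, forcing $\mathbb{E}_{C}+\mathbb{E}_{\overline{C}}=2\mathbb{E}_{C}=\sum_{e\in E(C)}\rho(e)\in S_{\widetilde{G}^{v_{i}}_{j}}$. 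On the $K_{b}$ side, since the edge removal procedure on $K_{b+1}$ is built symmetrically to the one previously performed on $K_{a+1}$, the direct analog of Remark~\ref{rem:rmv} should hold: any two minimal odd cycles $C^{\prime}$ and $\overline{C}^{\prime}$ with vertex sets in $V(K_{b})$ are joined by at least one edge of $\widetilde{G}^{v_{i}}_{j}$, which is enough to express $\mathbb{E}_{C^{\prime}}+\mathbb{E}_{\overline{C}^{\prime}}$ as a sum of $\rho(e)$'s.

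Having reduced to $\gamma=\mathbb{E}_{C}+\mathbb{E}_{C^{\prime}}$, I would split into two cases based on $\alpha_{w}$. If $\alpha_{w}=0$, then $\gamma_{w}=0$ forces $\beta_{w}=0$, so $\beta$ uses no edge incident to $w$; consequently both $\sum_{u\in V(K_{a})}\beta_{u}$ and $\sum_{v\in V(K_{b})}\beta_{v}$ are even, and combined with the odd contributions of $\gamma$ on each side this gives $\sum_{u\in V(K_{a})}\alpha_{u}$ and $\sum_{v\in V(K_{b})}\alpha_{v}$ both odd, so $\alpha\in A$. If $\alpha_{w}>0$, then $\beta_{w}>0$, hence $\beta$ uses some edge $\{w,v\}\in E(\widetilde{G}^{v_{i}}_{j})$; writing $\beta=\beta^{\prime}+\rho(\{w,v\})$ with $\beta^{\prime}\in S_{\widetilde{G}^{v_{i}}_{j}}$ and invoking Lemma~\ref{lemma:1.2} to obtain $\gamma+\mathbf{e}_{w}+\mathbf{e}_{v}\in S_{\widetilde{G}^{v_{i}}_{j}}$, we conclude $\alpha=\beta^{\prime}+(\gamma+\mathbf{e}_{w}+\mathbf{e}_{v})\in S_{\widetilde{G}^{v_{i}}_{j}}$.

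The main obstacle I expect is verifying the analog of Remark~\ref{rem:rmv} on the $K_{b}$ side: namely, that even after edge removal, any two minimal odd cycles supported in $V(K_{b})$ are joined by at least one edge of $\widetilde{G}^{v_{i}}_{j}$. This should follow by replaying the reasoning behind Remarks~\ref{rem:rmv2} and~\ref{rem:rmv} in the now-modified $K_{b+1}$: any triangle $\{v_{j_{1}},v_{j_{2}},v_{j_{3}}\}$ with $j_{1}<j_{2}<j_{3}$ has $v_{j_{3}}$ adjacent to $w$, and by the symmetry of the construction $v_{j_{3}}$ remains adjacent to every $v_{k}$ with $k>j_{3}$, providing the required bridges between disjoint triangles in $V(K_{b})$.
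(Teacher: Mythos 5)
Your proposal is correct and follows essentially the same route as the paper, whose own proof of this lemma simply defers to the argument of Lemma~\ref{lemma:2.1} together with Lemma~\ref{lemma:1.2}; you have merely written out explicitly the adaptations (the uniqueness of the triangle $\{u_{a-2},u_{a-1},u_{a}\}$ on the $K_{a}$ side and the $K_{b}$-side analogue of Remark~\ref{rem:rmv}) that the paper leaves implicit. Both of those verifications are sound, so no further changes are needed.
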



\begin{proof}
Let $\alpha$ be an arbitrary element in $\overline{S_{\widetilde{G}^{v_{i}}_{j}}}$. 
Due to the similar edge removal process, the proof is similar to that of Lemma \ref{lemma:2.1}. 
By similar arguments as in the proof of Lemma \ref{lemma:2.1}, we reduce to the
case $\alpha =\beta + \gamma$, where $\beta\in S_{\widetilde{G}^{v_{i}}_{j}}$, $\gamma = \mathbb{E}_{C} + \mathbb{E}_{C^{\prime}}$ for an exceptional pair $(C, C^{\prime})$ of $\widetilde{G}^{v_{i}}_{j}$.
Furthermore, we also get that $\alpha \in A$ if $\alpha_{w}=0$.
Assume that $\alpha_{w}>0$, then so is $\beta_{w}$. 
Hence among the edges defining the vector
$\beta$, there is at least one edge of the form $\{w,v\}$. 
Using Lemma \ref{lemma:1.2}, we get that the semigroup $S_{\widetilde{G}^{v_{i}}_{j}}$ contains $ \mathbb{E}_{C} + \mathbb{E}_{C^{\prime}} + \mathbf{e}_{w} +  \mathbf{e}_{v}$, hence it also contains $\alpha$.
\end{proof}

From the above observations, we have $S_{\widetilde{G}^{v_{i}}_{j}}\subset S_{\widetilde{G}^{v_{i}}_{j}}^{\prime}\subset \overline{S_{\widetilde{G}^{v_{i}}_{j}}}\subset  S_{\widetilde{G}^{v_{i}}_{j}}\cup A ,$ for all $ 1\leq i\leq b-2$, and $i\leq j\leq b-1$.

\begin{prop}\label{prop:3}
The edge ring $K[\widetilde{G}^{v_{i}}_{j}]$ of the graph $\widetilde{G}^{v_{i}}_{j}$ is non-normal and satisfies $(S_{2})$-condition, for all $1\leq i\leq b-2$ and $i\leq j\leq b-1$.
\end{prop}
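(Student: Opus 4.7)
The plan is to proceed in direct parallel to Proposition~\ref{prop:2}. The proof splits into two parts: establishing non-normality by exhibiting an exceptional pair, and establishing the $(S_{2})$-condition by showing that every element of $\overline{S_{\widetilde{G}^{v_{i}}_{j}}}\setminus S_{\widetilde{G}^{v_{i}}_{j}}$ lies outside $S^{\prime}_{\widetilde{G}^{v_{i}}_{j}}$, using the regular vertex $w$.

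For non-normality, I would take $C=\{u_{a-2},u_{a-1},u_{a}\}$ with $V(C)\subset V(K_{a})$ and $C^{\prime}=\{v_{b-2},v_{b-1},v_{b}\}$ with $V(C^{\prime})\subset V(K_{b})$. Both triangles survive the construction: the three edges of $C$ are untouched by the $K_{a+1}$-reduction, and the three edges of $C^{\prime}$ are untouched by the $K_{b+1}$-reduction (the removal step indexed by $p$ only affects edges $\{v_{p},v_{q}\}$ with $q\leq b-1$). Their vertex sets are disjoint, and because every path from $V(K_{a})$ to $V(K_{b})$ must pass through $w$, no bridge joins them. Hence $(C,C^{\prime})$ is exceptional, $\widetilde{G}^{v_{i}}_{j}$ fails the odd cycle condition, and Theorem~\ref{thm:normal} gives non-normality of $K[\widetilde{G}^{v_{i}}_{j}]$.

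For the $(S_{2})$-condition, I would pick $\alpha\in \overline{S_{\widetilde{G}^{v_{i}}_{j}}}\setminus S_{\widetilde{G}^{v_{i}}_{j}}$. By Lemma~\ref{lemma:2.2}, $\alpha\in A$, so $\alpha_{w}=0$ with both coordinate-sums on $V(K_{a})$ and on $V(K_{b})$ odd. The essential step is to verify that $w$ is regular in $\widetilde{G}^{v_{i}}_{j}$, so that $F_{w}$ is a facet and $S_{\widetilde{G}^{v_{i}}_{j}}\cap F_{w}=\mathbb{Z}_{\geq 0}\mathcal{A}_{\widetilde{G}^{v_{i}}_{j}\setminus w}$. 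Once this is in hand, every $\beta$ in that intersection satisfies $\beta_{w}=0$ together with even coordinate-sums on $V(K_{a})$ and on $V(K_{b})$ (every edge not touching $w$ contributes $2$ to one of these sums), so $\alpha+\beta$ still lies in $A$, hence outside $S_{\widetilde{G}^{v_{i}}_{j}}$. Consequently $\alpha\notin S_{w}$, therefore $\alpha\notin S^{\prime}_{\widetilde{G}^{v_{i}}_{j}}$; combined with $S_{\widetilde{G}^{v_{i}}_{j}}\subset S^{\prime}_{\widetilde{G}^{v_{i}}_{j}}$, this yields the required equality.

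The main obstacle is the regularity of $w$ at every intermediate stage $(i,j)$, i.e.\ that removing $w$ leaves each connected component of $\widetilde{G}^{v_{i}}_{j}\setminus w$ with an odd cycle. This reduces to tracking the construction. The two triangles $\{u_{a-2},u_{a-1},u_{a}\}$ and $\{v_{b-2},v_{b-1},v_{b}\}$ survive, as argued above. Furthermore, each $v_{k}$ ($1\leq k\leq b-1$) remains adjacent to $v_{b}$ throughout the $K_{b+1}$-reduction, and each $u_{k}$ remains adjacent to $u_{a}$ after the completed $K_{a+1}$-reduction, so the $V(K_{a})$-part and the $V(K_{b})$-part of $\widetilde{G}^{v_{i}}_{j}\setminus w$ are each connected and each carries a triangle. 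Past this regularity check, the argument is identical in structure to the proof of Proposition~\ref{prop:2}.
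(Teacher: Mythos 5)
Your proposal is correct and follows essentially the same route as the paper: non-normality via the surviving exceptional pair $(\{u_{a-2},u_{a-1},u_{a}\},\{v_{b-2},v_{b-1},v_{b}\})$, and the $(S_{2})$-condition via Lemma~\ref{lemma:2.2} together with the parity obstruction at the facet $F_{w}$ of the regular vertex $w$. Your explicit verification that $w$ remains regular (each side of $\widetilde{G}^{v_{i}}_{j}\setminus w$ stays connected through $u_{a}$, resp.\ $v_{b}$, and retains a triangle) is a detail the paper only asserts implicitly, but it is the same argument.
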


\begin{proof}
As per our construction, the pair $(\{u_{a-2},u_{a-1},u_{a}\},\{v_{b-2},v_{b-1},v_{b}\})$ is contained in every $\widetilde{G}^{v_{i}}_{j}$ and is exceptional, for all $1\leq i\leq b-2$, and $i \leq j\leq b-1$. Hence, $K[\widetilde{G}^{v_{i}}_{j}]$ is always non-normal.

Let us consider an element $\alpha \in A$ such that $\alpha\in \overline{S_{\widetilde{G}_{j}^{v_{i}}}} \backslash S_{\widetilde{G}_{j}^{v_{i}}}$. 
The common vertex $w$ is regular in $\widetilde{G}^{v_{i}}_{j}$, and corresponding to this regular vertex, we have $S_{\widetilde{G}_{j}^{v_{i}}}\cap F_{w}:=\mathbb{Z}_{\geq 0 }\mathcal{A}_{\widetilde{G}_{j}^{v_{i}}\backslash w}$.
By a similar proof as that of Proposition~\ref{prop:2}, we can demonstrate that there exists no $\beta \in S_{\widetilde{G}_{j}^{v_{i}}}\cap F_{w}$, such that $\alpha +\beta \in S_{\widetilde{G}_{j}^{v_{i}}}$, and hence $S^{\prime}_{\widetilde{G}_{j}^{v_{i}}} = S_{\widetilde{G}_{j}^{v_{i}}}$. 
\end{proof}

Let the graph $\widetilde{G}^{v_{b-2}}_{b-1}:= \widetilde{G}$ (Figure~\ref{fig:rmvd}). We observe that, $\widetilde{G}$ is a subgraph of $G_{a,b}$ with $|V(\widetilde{G})|=|V(G_{a,b})|$ and $|E(\widetilde{G})|=a+b+2=d+1$. By Proposition~\ref{prop:3}, we know that the edge ring $K[\widetilde{G}]$ is non-normal and also satisfies $(S_{2})$-condition. Therefore, we observe that $\widetilde{G}$ is the graph on $d$ vertices with the least number of edges, $d+1$ edges, such that the edge ring is non-normal and meets $(S_{2})$-condition. This completes the proof of a part of the statement of Theorem \ref{thm:main}.

\section{Addition of edges to $G_{a,b}$ breaks non-normality or $(S_{2})$-condition}\label{sec:5}

In this section, we prove that any addition of (one or more) new edges to $G_{a,b}$ either breaks the non-normality of the edge ring or violates the $(S_{2})$-condition. 

Let us construct a new graph $G^{\prime}$ on the vertex set $V(G^{\prime})=V(G_{a,b})$, by introducing one or more edges to $G_{a,b}$. Since $K_{a+1}$ and $K_{b+1}$ are complete graphs, each of the new edges will be of the form $\{u_{i},v_{j}\}$, for some $1\leq i\leq a$ and $1\leq j \leq b$. For instance, addition of a single edge $\{u_{2},v_{3}\}$ to the graph $G_{a,b}$ is illustrated in Figure~\ref{fig:add}.

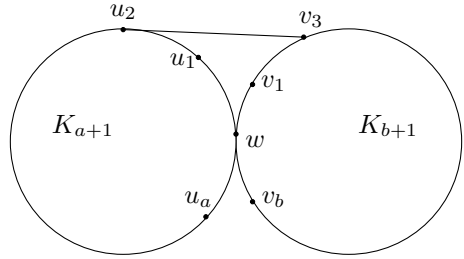
\begin{figure}[ht]
\centering
\begin{align*}
\begin{tikzpicture}
\filldraw[color=black!100, fill=white!5, thin](6,3) circle (1.5);
\filldraw[color=black!100, fill=white!5, thin](3,3) circle (1.5);
\filldraw [black] (4,4.12) circle (0.8pt);
\filldraw [black] (3,4.49) circle (0.8pt);
\filldraw [black] (5.4,4.39) circle (0.8pt);
\filldraw [black] (4.1,2) circle (0.8pt);
\filldraw [black] (4.72,2.2) circle (0.8pt);
\filldraw [black] (4.72,3.76) circle (0.8pt);
\filldraw [black] (4.5,3.1) circle (0.8pt);
\draw[black, thin] (3,4.49) -- (5.4,4.39);
\filldraw [black] (5,2) node[anchor=south] {$v_{b}$};
\filldraw [black] (5,4) node[anchor=north] {$v_{1}$};
\filldraw [black] (3.8,4.3) node[anchor=north] {$u_{1}$};
\filldraw [black] (4.5,3) node[anchor=west] {$w$};
\filldraw [black] (6,3.2) node[anchor=west] {$K_{b+1}$};
\filldraw [black] (3,4.5) node[anchor=south] {$u_{2}$};
\filldraw [black] (5.5,4.4) node[anchor=south] {$v_{3}$};
\filldraw [black] (3,3.2) node[anchor=east] {$K_{a+1}$};
\filldraw [black] (4,2) node[anchor=south] {$u_{a}$};
\end{tikzpicture}
\end{align*}
\caption{The graph $G^{\prime}$ obtained by adding edge $\{u_{2},v_{3}\}$ to $G_{a,b}$}
\label{fig:add}
\end{figure}

We can observe that for any $G^{\prime}$, all of its vertices are regular and the fundamental sets are: $\{i\},$ for some $  i\in V(G^{\prime})$  and $\{i,j\},\ \forall \ \{i,j\}\notin E(G^{\prime}).$

\begin{prop}\label{prop:add}
Let $G^{\prime}$ be a graph on the vertex set $V(G^{\prime})=V(G_{a,b})$, such that $G^{\prime}$ is constructed by adding one or more new edges to $G_{a,b}$. Then for any $G^{\prime}$, the edge ring $K[G^{\prime}]$ is either normal or it does not satisfy $(S_{2})$-condition.
\end{prop}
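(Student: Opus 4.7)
The plan is to argue by a dichotomy: either $G'$ satisfies the odd cycle condition, in which case Theorem~\ref{thm:normal} yields that $K[G']$ is normal and we are done; or $G'$ still admits an exceptional pair $(C, C')$, in which case I intend to apply Theorem~\ref{thm:S2} to deduce $\mathbb{E}_{C} + \mathbb{E}_{C'} \in S'_{G'} \setminus S_{G'}$, so that the $(S_{2})$-condition fails. That $\mathbb{E}_{C} + \mathbb{E}_{C'} \notin S_{G'}$ for any exceptional pair follows from a perfect-matching obstruction: any expression $\sum c_e \rho(e)$ with $c_e \in \mathbb{Z}_{\geq 0}$ equal to this $0/1$ vector forces $c_e \in \{0,1\}$ and the chosen edges to form a perfect matching on $V(C) \sqcup V(C')$; but $|V(C)|$ and $|V(C')|$ are both odd, so such a matching must use an edge between $V(C)$ and $V(C')$, which is forbidden by exceptionality.

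To verify Condition~(1) of Theorem~\ref{thm:S2}, I first note that every vertex of $G'$ remains regular, since $G_{a,b} \subset G'$, every vertex of $G_{a,b}$ is regular, and adding edges only merges components of $G' \setminus v$. For any regular $v \in V(G') \setminus (V(C) \cup V(C'))$: if $v \neq w$, then $w$ persists in $G' \setminus v$ and is adjacent to every other vertex, so $G' \setminus v$ is connected; if $v = w$, then any newly added edge $\{u_i, v_j\}$ bridges the two components $K_a$ and $K_b$ of $G_{a,b} \setminus w$, again making $G' \setminus w$ connected. Either way, $C$ and $C'$ lie in the same component. For Condition~(2), I enumerate the fundamental sets of $G'$: singletons $\{v\}$ with $v \in V(G')$, and pairs $\{u_p, v_q\}$ with $\{u_p, v_q\} \notin E(G')$. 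When $T = \{w\}$ or $T = \{u_p, v_q\}$, one has $w \in T \cup N(G'; T)$ together with $N(w) = V(G') \setminus \{w\}$, so $T \cup N(G'; T) = V(G')$, rendering Condition~(2) vacuous. When $T = \{u_p\}$, the set $T \cup N(G'; T)$ contains $V(K_{a+1})$, so the disjointness hypothesis forces $V(C) \cup V(C')$ into a subset of $V(K_b)$ whose induced subgraph is a clique, hence connected; the case $T = \{v_q\}$ is symmetric.

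The principal technical obstacle I anticipate is the careful enumeration and verification of the fundamental sets, because newly added edges $\{u_i, v_j\}$ can both create new minimal odd cycles (e.g., the triangles $\{u_i, v_j, w\}$) altering the list of candidate exceptional pairs, and can eliminate independent pairs $\{u_p, v_q\}$ from the list of fundamental sets. Fortunately, the analysis collapses uniformly: $T \cup N(G'; T)$ always contains either $w$ (whose neighborhood is the rest of $V(G')$) or one of the cliques $V(K_{a+1})$, $V(K_{b+1})$, and in the residual cases the relevant induced subgraph sits inside a clique, which keeps $C$ and $C'$ in a single component irrespective of the specific added edges.
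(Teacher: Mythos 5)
Your proposal is correct and follows essentially the same route as the paper: the dichotomy between the odd cycle condition holding (normality via Theorem~\ref{thm:normal}) and the survival of an exceptional pair, followed by verification of both hypotheses of Theorem~\ref{thm:S2} using the connectedness supplied by $w$ and by the added edges $\{u_i,v_j\}$, and the fact that every independent set of $G^{\prime}$ has at most two vertices. Your extra perfect-matching argument for $\mathbb{E}_{C}+\mathbb{E}_{C^{\prime}}\notin S_{G^{\prime}}$ is sound but redundant, since the cited theorem already concludes $S_{G^{\prime}}\neq S^{\prime}_{G^{\prime}}$; otherwise your treatment of Condition~(2) is, if anything, slightly more explicit than the paper's.
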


\begin{proof}

Suppose we construct a graph $G^{\prime}$ by adding at least minimal number of edges $\{u_{i},v_{j}\}$, where $1\leq i\leq a$ and $1\leq j \leq b$, such that we connect all the exceptional pairs of $G_{a,b}$. Thus, $G^{\prime}$ satisfies the odd-cycle condition and therefore the corresponding edge ring $K[G^{\prime}]$ is normal.

Now, let us consider the case where we construct a graph $G^{\prime}$ such that $K[G^{\prime}]$ is non-normal. Then, we prove that for any such $G^{\prime}$, the edge ring $K[G^{\prime}]$ will not satisfy $(S_{2})$-condition.

Suppose, we construct $G^{\prime}$ by adding new edges $\{u_{i},v_{j}\}$ to the graph $G_{a,b}$, for some $1\leq i\leq a$ and $1\leq j \leq b$, such that $G^{\prime}$ consists of at least one pair of $3$-cycles, $(\{u_{i_{1}},u_{i_{2}},u_{i_{3}}\},\{v_{j_{1}},v_{j_{2}},v_{j_{3}}\})$ with either $i_{k}\neq i$ or $j_{k}\neq j$ for any $1\leq k\leq 3$. This pair will be exceptional in $G^{\prime}$ and thus, the corresponding edge ring $K[G^{\prime}]$ is non-normal.  

Now, we consider any exceptional pair $(C,C^{\prime})$ of the graph $G^{\prime}$. For any regular vertex $v\in V(G^{\prime})\backslash [V(C)\cup V(C^{\prime})]$ such that $v\neq w$, we observe that $G^{\prime}\backslash v$ is a connected graph with the common vertex $w$. Let us consider the regular vertex $w\in V(G^{\prime})\backslash [V(C)\cup V(C^{\prime})]$. As per our construction, the graph $G^{\prime}$ contains edges of the type $\{u_{i},v_{j}\}$, for some $1\leq i\leq a$ and $1\leq j \leq b$. The existence of such edges in $G^{\prime}$ ensures the connectedness of the graph $G^{\prime}\backslash w$.

Thus for any  regular vertex $v\in V(G^{\prime})\backslash [V(C)\cup V(C^{\prime})]$, we observe that the graph $G^{\prime}\backslash v$ is always a connected graph. Hence both $C$ and $C^{\prime}$ belong to the same connected components of $G^{\prime}\backslash v$.

Since both $K_{a+1}$ and $K_{b+1}$ are complete graphs, any vertex in $V(K_{a+1})$ or $ V(K_{b+1})$ is adjacent to all the other vertices of $K_{a+1}$ and $K_{b+1}$ respectively.
Hence for all $v\in V(G^{\prime})$, we have $[V(C)\cup V(C^{\prime})] \cap [\{v\}\cup N(G^{\prime};\{v\}) ]\neq\emptyset.$ Let us consider the fundamental set of the form $\{u_{i},v_{j}\},$ such that $\{u_{i},v_{j}\}\notin E(G^{\prime})$.
By the completeness of $K_{a+1}$ and $K_{b+1}$, $\{u_{i},v_{j}\}\cup N (G^{\prime};\{u_{i},v_{j}\} )= V(G^{\prime}).$
Hence for any fundamental set $T$ of $G^{\prime}$, we have $$\Big[V(C)\cup V(C^{\prime})\Big]\cap \Big[ T \cup N\big(G^{\prime}; T \big)\Big]\neq\emptyset.$$

Therefore, by Theorem~\ref{thm:S2}, $\mathbb{E}_{C}+\mathbb{E}_{C^{\prime}}\in S_{G^{\prime}}^{\prime}$. In particular, $S_{G^{\prime}}\neq S_{G^{\prime}}^{\prime}$. Hence, the edge ring $K[G^{\prime}]$ does not satisfy $(S_{2})$-condition.
\end{proof}


\section{Conclusions}\label{sec:6}

A simple connected finite graph on $d$ vertices with a non-normal edge ring must contain at least one exceptional pair of odd cycles. Thus the minimal graph on $d$ vertices satisfying the above condition must be a graph consisting of two minimal odd cycles and a path (of at least length $2$) connecting the two cycles. This minimal graph will have exactly $d+1$ number of edges. In Section~\ref{sec:4}, we proved the existence of such a minimal graph $\widetilde{G}$, which satisfies the main theorem (Theorem~\ref{thm:main}).

We have examined the graph $G_{a,b}$ in detail. From Section~\ref{sec:5}, we can conclude that any addition of (one or more) new edges to $G_{a,b}$ either breaks the non-normality of the edge ring or violates $(S_{2})$-condition. Thus, we may conclude that $G_{a,b}$ is the graph on $d$ vertices with the maximum number of edges such that, the corresponding edge ring is non-normal and satisfies $(S_{2})$-condition. For the graph $G_{a,b}$, we have $|V(G_{a,b})|=d=a+b+1$ and $3\leq a\leq b$. Therefore, in order to maximize the number of edges in $G_{a,b}$, we have to consider $a=3$ and $b=d-4$. That is, $$\big\lvert E(G_{a,b})\big\rvert \leq \binom{4}{2}+\binom{d-3}{2}=\frac{d^{2}-7d+24}{2} .$$

This provides us very strong supporting evidence that $\frac{d^{2}-7d+24}{2}$ could be the maximal number of edges possible for a graph on $d$ vertices such that, its edge ring is non-normal and satisfies $(S_{2})$-condition.

\begin{proof}[\textbf{Proof of Theorem~\ref{thm:main}}]

Let us consider the graph $G_{3,b}$ on $d$ vertices such that $d\geq 7$. We have $|E(G_{3,b})|=\frac{d^{2}-7d+24}{2}$ and by Proposition~\ref{prop:1}, the edge ring $K[G_{3,b}]$ is non-normal and satisfies $(S_{2})$-condition. 

Through the edge removal processes discussed in Section~\ref{sec:4}, by eliminating one edge from the graph $G_{3,b}$ per step, we can gradually build up a graph on $d$ vertices with $d+1$ edges such that, its edge ring is non-normal and satisfies $(S_{2})$-condition. Proposition~\ref{prop:2} and Proposition~\ref{prop:3} guarantee that the edge ring of each of the graphs obtained after each removal step is always non-normal and will satisfy $(S_{2})$-condition.

 Therefore we prove that for any given integers $d$ and $n$ such that, $d\geq 7$ and $ d+1\leq n\leq\frac{d^{2}-7d+24}{2}$, we can always construct a finite simple connected graph on $d$ vertices and having $n$ edges such that, the edge ring of the graph is non-normal and satisfies $(S_{2})$-condition.
\end{proof}

\bigskip

\end{document}